\newtheorem{df}{Definition}[section]
\newtheorem{lm}[df]{Lemma}
\newtheorem{pr}[df]{Proposition}
\newtheorem{Th}[df]{Theorem}
\newtheorem{co}[df]{Corollary}
\newtheorem{rem}[df]{Remark}
\newenvironment{oss}{\begin{rem} \begin{rm}}{\end{rm} \end{rem}}
\newcommand{\cchi}{\mbox{\large $\chi$}}
\newcommand{\cyl}{(0,+\infty)\times\R^n}
\newcommand{\ccyl}{[0,+\infty)\times\R^n}
\newcommand{\Tcyl}{(0,T)\times\R^n}
\newcommand{\Tlccyl}{[0,T)\times\R^n}
\newcommand{\e}{\varepsilon}
\newcommand{\eps}{\varepsilon}
\newcommand{\R}{\mathbb{R}}
\newcommand{\N}{\mathbb{N}}
\newcommand{\T}{\mathbb T}
\newcommand{\M}{\mathcal{M}}
\newcommand{\HH}{\mathcal{H}}
\newcommand{\D}[1]{\mbox{\rm #1}}
\newcommand{\Sub}{\mathscr{S}}
\renewcommand{\bar}{\overline}
\begin{document}

\title{On the (non) existence of viscosity solutions\\ of multi--time Hamilton--Jacobi equations}

\author{A. DAVINI, M. ZAVIDOVIQUE
%
}
\address{Dip. di Matematica, {Sapienza} Universit\`a di Roma,
P.le Aldo Moro 2, 00185 Roma, Italy}
\email{davini@mat.uniroma1.it}
\address{
IMJ-PRG (projet Analyse Alg\' ebrique), UPMC,  
4, place Jussieu, Case 247, 75252 Paris C\' edex 5, France}
\email{zavidovique@math.jussieu.fr}
\keywords{commuting
Hamiltonians, viscosity solutions, symplectic geometry}
\subjclass[2010]{35F21, 49L25, 37J50.}

\thanks{The second author is financed by ANR-12-BLAN-WKBHJ}  

\begin{abstract}
We prove that the multi--time Hamilton--Jacobi equation in general cannot be solved in the viscosity sense, in the non-convex setting, even when the Hamiltonians are in involution.
\end{abstract}

\maketitle

\section*{Introduction}
There are two important Hamilton--Jacobi equations associated with a continuous function $H$ defined on the cotangent bundle of a smooth Riemannian manifold $M$: 
the stationary equation, that is a nonlinear PDE of the kind
$$
H(x,d_x u)=c\qquad\hbox{in $M$},
$$
where $c$ is a real constant and the unknown $u$ is a real function defined on $M$;
and the evolutionary equation, which is a time-dependent equation of the form 
\begin{eqnarray}\label{intro evolutive eq}
\left\{ 
\begin{array}{ll}
\partial_t u +H(x,d_x u) = 0\quad & \hbox{in $(0,+\infty)\times M$}\bigskip \\
u(0,x)=u_0(x), & \hbox{in $M$},
\end{array}
\right.
\end{eqnarray}
where $u_0:M\to\R$ is an initial datum, and the unknown $u$ is a real function of variables $(t,x)\in[0,+\infty)\times M$.
 
The first problem that naturally arises is that of finding a good notion of solution. It is in fact well known that such equations do not usually admit classical solutions. For instance, 
the method of characteristics provides,  for smooth initial data and under general assumptions on $H$, classical solutions for \eqref{intro evolutive eq} only for small times, until shocks between characteristics occur. 

A first idea is to look for Lipschitz functions which solve the equation almost everywhere. However, such a notion of solution is inadequate since it lacks of good uniqueness and stability properties, see for instance \cite{barles_book}. That is why two better notions of weak solutions were introduced.\smallskip 

The first one, introduced by Crandall and Lions in 1983 \cite{CL83}, is that of {\em viscosity solution} and it applies to a wide range of first and second order nonlinear PDE. It is in some way reminiscent of distributions, in the sense that it uses test functions to drop derivatives, and permits to name solutions functions that are just continuous, regardless of their regularity. This notion has revealed to be extremely powerful and flexible, and it has been generalized since then in many different directions.\smallskip   

The second one, more geometric in nature, is termed {\em variational solution} and was introduced by Chaperon, Sikorav and Viterbo  (see \cite{chaperon} or \cite{viterbo} and references therein) and is more specific of evolutionary equations of the kind of \eqref{intro evolutive eq}. 
It follows the idea of the method of characteristics, taking the graph of the differential of the initial condition and pushing it with the Hamilton flow. When the image is no longer a graph, a natural way to cut the obtained Lagrangian manifold, known as the graph selector, allows to reconstruct a function which is the desired variational solution.\smallskip 

Note that, as can be expected, both those notions often yield almost everywhere solutions (meaning Lipschitz functions satisfying the Hamilton--Jacobi equation almost everywhere).   In the case of a Hamiltonian $H(x,p)$ which is convex in the momentum variable $p$,  it was proved by Zhukovskaya (see \cite{zhu} and also \cite{beca,wei}) 
that both notions provide the same solution. However, they may differ in the general case. Indeed, variational solutions do not  necessarily  verify the semi--group (or Markov) property. There is a simple way (proposed  by Chaperon) to force variational solutions to verify this property and Wei \cite{wei} recently established that the obtained functions are the viscosity solutions (see also the work of Roos \cite{Roos} on this matter).\smallskip

In this paper we will focus on the solvability of the following system of uncoupled Hamilton--Jacobi equations: 
\begin{equation*}
\begin{cases}
\displaystyle
{\partial_t u}+H(x, d_x u)=0&\quad \hbox{in $(0,+\infty)\times (0,+\infty)\times M$}\medskip\\
\displaystyle {\partial_s u}+G(x, d_x u)=0&\quad \hbox{in $(0,+\infty)\times (0,+\infty)\times M$}\medskip\\
u(0,0,x)=u_0(x)&\quad \hbox{on $M$}.
\end{cases}
\end{equation*}
Such a system is known as {\em multi--time Hamilton--Jacobi equation} and has its roots in economics. There is now a consequent literature concerning this problem, where either one or the other notion of solution has been considered.  The issue about existence and uniqueness of viscosity solutions of the multi--time equation was first addressed by Lions and Rochet \cite{lions}  in the case of convex and coercive Hamiltonians on $\R^{n}\times \R^{n}$ only depending on the momentum variable. This work was subsequently generalized in \cite{barles} to Hamiltonians depending on both variables of $\R^{n}\times \R^{n}$, but still convex and coercive in the momentum. As a counterpart, a commutation hypothesis is necessary (which is always verified in the case of Lions and Rochet), namely the vanishing of the Poisson bracket:
$$\{H,G\}:=\partial_p H\cdot  \partial_x G - \partial_p G \cdot \partial_x H=0\qquad\hbox{in $\R^n\times\R^n$}.$$
Those results were then extended to less regular settings in \cite{MoRa}.

In the framework of symplectic geometry and variational solutions, Cardin and Viterbo \cite{viterbo} extended these results to a larger class of Hamiltonians that do not satisfy any convexity assumption in the momentum variable. The authors proved that the null Poisson bracket condition entails existence and uniqueness of the variational solutions of the multi--time equation.

Knowing the results of Cardin and Viterbo, and the close link between variational and viscosity solutions, it seems natural to ask the following question: if two regular Hamiltonians (non necessarily convex) $H$ and $G$ satisfy the null Poisson bracket condition, can we solve the multi--time Hamilton--Jacobi in the viscosity sense? 

The purpose of this note is to show that the answer to this question is, in general, negative.\smallskip
%

The article is organized as follows. In the first section we recall the definitions of viscosity and variational solutions and we present the link between the two through Wei's result. In the second section we give our main result. More precisely, we show that, when the convexity condition on one   Hamiltonian is dropped, and replaced by a concavity condition, the multi--time equation admits a viscosity solution only for very special choices of the initial datum $u_{0}$ (see Theorem \ref{main} for the precise statement). Given a pair of such Hamiltonians $H$ and $G$, the corresponding set of admissible initial data is typically empty. An example of this fact is provided in Corollary \ref{cor main}.  
 
In the appendix we give the proof of a folklore theorem on existence and uniqueness of viscosity solutions to the evolutionary Hamilton--Jacobi equation. The result is proved under a set of conditions either more general or different than the ones usually assumed in the literature. The techniques and ideas employed are by no means new, however, since they are disseminated in the literature and the adaptation to the case at issue requires some technical work, 
we felt it might be useful to gather them in a single reference and provide a neat proof of this fact. \\

\section{Preliminaries}
\numberwithin{equation}{section}

Throughout the paper, we will call {\em Hamiltonian} a continuous function defined on the cotangent bundle of a smooth and compact Riemannian manifold $M$. We will say that $H$ is {\em coercive} if 
\begin{equation}\label{H coercive}\tag{C}
 \lim_{|p|\to \infty} H(x,p)=+\infty\qquad\hbox{for every $x\in M$.}
\end{equation}
For convenience, we will assume that $M$ is the flat $n$-dimensional torus $\T^n$, however most results keep holding in the more general setting since they are essentially local in nature.\smallskip

In this section, we focus on the case of a single Hamilton--Jacobi equation, i.e.
\begin{equation}\label{hj}\tag{HJ}
 \partial_t u +H(x,D_x u)=0\qquad\hbox{in $(0,+\infty)\times\T^n$,}
\end{equation}
where we assume either $H$ or $-H$ to be a coercive Hamiltonian. 
When $H$ is at least $C^1$ with Lipschitz derivatives ($C^{1,1}$ in short), it induces a Lipschitz vectorfield, called {\em Hamiltonian vectorfield}, defined as follows:
 $$
X_H (x,p) =\big(\partial_pH(x,p),-\partial_x H(x,p)\big)\qquad\hbox{for all $(x,p)\in \T^n \times \R^n$.}
$$
When well defined (which is the case for coercive Hamiltonians), the flow of this vectorfield will be denoted by $(\varphi^t)$.

 \subsection{Viscosity solutions}\label{sez visco}

Let $u$ be a continuous function in $(0,+\infty)\times\T^n$. 

The function $u$ is called a {\em viscosity subsolution} of \eqref{hj} if, for every function $\varphi\in\D{C}^1((0,+\infty)\times\T^n)$ such that $u-\varphi$ attains a local maximum at $(t,x)\in (0,+\infty)\times\T^n$, we have 
\begin{equation}\label{def subsol}
{\partial_t \varphi}(x,t)+H\big(x, D_x \varphi (x,t)\big)\leqslant 0.
\end{equation}
Any such test function $\varphi$ will be called {\em supertangent} to $u$ at $(t,x)$. 

The function $u$ is called a {\em viscosity supersolution} of \eqref{hj} if, for every function $\varphi\in\D{C}^1((0,+\infty)\times\T^n)$ such that $u-\varphi$ attains a local minimum at $(t,x)\in (0,+\infty)\times\T^n$, we have 
\begin{equation}\label{def supersol}
{\partial_t \varphi}(x,t)+H\big(x, D_x \varphi (x,t)\big)\geqslant 0.
\end{equation}
Any such test function $\varphi$ will be called {\em subtangent} to $u$ at $(t,x)$. Last, $u$ is called a {\em viscosity solution} of \eqref{hj} if it is both a sub and a supersolution. 

It is well known, see for instance \cite{barles_book}, that the notions of viscosity sub and supersolutions are local, in the sense that the test function $\varphi$ needs to be defined only in a neighborhood of the point $(t,x)$. Moreover, up to adding to $\varphi$ a quadratic term, such a point can be always assumed to be either a strict maximum or a strict minimum point of $u-\varphi$. In this instance, we will say that $\varphi$ is a {\em strict supertangent} (resp., {\em strict subtangent}) to $u$ at $(t,x)$.

One of the main features of the notion of viscosity solution is that it is extremely stable. Indeed, in rough terms, if $\tilde u$ is a small perturbation of $u$ and the test function $\varphi$ is, for instance, a strict subtangent to $u$ at $(t,x)$, then it will also be a subtangent to $\tilde u$ at a point $(\tilde t,\tilde x)$ close to $(t,x)$. It follows that a continuous function obtained as local uniform limit of a sequence of viscosity solutions of \eqref{hj} is still a solution of the same equation. Furthermore, it has the advantage to be a reasonable notion, in the sense that it yields existence and uniqueness results under mild conditions, as it is illustrated by the following 
now classical result
  
\begin{Th}\label{visco}
Let $H:\T^n\times \R^n\to\R$ such that either $H$ or $-H$ is a coercive Hamiltonian. If $u_0\in \D{C}(\T^n)$, there exists a unique uniformly continuous function $u$ in $[0,+\infty)\times\R^n$ satisfying $u(0,\cdot)=u_0$ and solving \eqref{hj} in the viscosity sense. Furthermore, if $u_0$ is Lipschitz-continuous in $\T^n$, then $u$ is Lipschitz continuous in $[0,+\infty)\times\R^n$ and its Lipschitz constant only depends on $\|Du_0\|_\infty$ and $H$. 
\end{Th}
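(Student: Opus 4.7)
The strategy is the classical one: establish a comparison principle (yielding uniqueness), construct solutions by Perron's method for Lipschitz initial data, and extend to continuous $u_0$ by uniform approximation. The two cases---$H$ coercive and $-H$ coercive---are exchanged by the symmetry $(u,H(x,p))\mapsto(-u,-H(x,-p))$, so I may assume $H$ itself is coercive.

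\smallskip

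\emph{Comparison principle.} Fix $T>0$. Given bounded upper and lower semicontinuous sub- and supersolutions $u$, $v$ of \eqref{hj} on $[0,T]\times\T^n$ with $u(0,\cdot)\le v(0,\cdot)$, I would prove $u\le v$ by doubling variables via the penalization
\begin{equation*}
\Phi_\eps(t,s,x,y)=u(t,x)-v(s,y)-\frac{(t-s)^2+|x-y|^2}{2\eps}-\frac{\sigma}{T-t},\qquad \sigma>0\ \text{small},
\end{equation*}
and analysing the viscosity inequalities at an interior maximizer $(t_\eps,s_\eps,x_\eps,y_\eps)$, which exists for $\sigma$ small by compactness of $\T^n$ and the initial ordering. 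Subtracting the test-function inequalities gives
\begin{equation*}
H\!\left(x_\eps,\tfrac{x_\eps-y_\eps}{\eps}\right)-H\!\left(y_\eps,\tfrac{x_\eps-y_\eps}{\eps}\right)\le -\frac{\sigma}{(T-t_\eps)^2}.
\end{equation*}
The standard penalization lemma sends $|x_\eps-y_\eps|$ and $|t_\eps-s_\eps|$ to $0$, while the subsolution inequality bounds $H(x_\eps,(x_\eps-y_\eps)/\eps)$ from above, so coercivity of $H$ forces $|(x_\eps-y_\eps)/\eps|$ to stay bounded. Uniform continuity of $H$ on the resulting compact set of momenta then contradicts the displayed inequality for small $\eps$, and letting $\sigma\to 0$ concludes. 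This step is the main technical obstacle.

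\smallskip

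\emph{Existence and Lipschitz regularity for Lipschitz $u_0$.} Put $L:=\|Du_0\|_\infty$ and $M:=\max\{|H(x,p)|:(x,p)\in\T^n\times\bar B_L\}$, finite by continuity. The Lipschitz functions $u_0\pm Mt$ are a.e.\ super- and subsolutions of \eqref{hj}, respectively, and hence viscosity ones by Lipschitz regularity. Perron's method in its standard form for continuous Hamiltonians then produces a viscosity solution $u$ sandwiched between them, so in particular $u(0,\cdot)=u_0$. A semigroup argument---applying comparison to the solutions $u(\cdot+h,\cdot)$ and $u(\cdot,\cdot)$, whose initial data differ by at most $Mh$ in sup norm---yields $|u(t+h,x)-u(t,x)|\le Mh$ uniformly, i.e.\ time-Lipschitz constant $M$. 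At any point of differentiability of $u$ the viscosity inequalities give $|H(x,D_x u)|\le M$, and coercivity of $H$ upgrades this to $|D_x u|\le C(L,H)$ almost everywhere, so $u$ is globally Lipschitz with constant depending only on $\|Du_0\|_\infty$ and $H$.

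\smallskip

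\emph{Continuous $u_0$.} I would approximate $u_0$ uniformly on $\T^n$ by Lipschitz functions $u_0^k$ and form the Lipschitz viscosity solutions $u^k$ from the previous step. Comparison gives $\|u^k-u^j\|_{L^\infty([0,+\infty)\times\T^n)}\le\|u_0^k-u_0^j\|_\infty$, so $u^k\to u$ uniformly; stability of viscosity solutions under uniform convergence then makes $u$ a viscosity solution with $u(0,\cdot)=u_0$. Uniform continuity of $u$ on $[0,+\infty)\times\T^n$ follows from a standard three-epsilon argument combining $\|u-u^k\|_\infty\to 0$ with the uniform-in-$t$ Lipschitz regularity of each $u^k$. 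Uniqueness in the uniformly continuous class is immediate from the comparison principle.
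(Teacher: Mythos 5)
Your overall architecture is the same as the paper's: comparison principle, Perron's method with the barriers $u_0\pm Mt$ for Lipschitz data, Lipschitz regularity from coercivity, and density for continuous data. Reducing to $H$ coercive via $(u,H)\mapsto(-u,-H(x,-p))$ is exactly the paper's Proposition~\ref{prop change sign}. Working directly on $\T^n$ instead of on $\R^n$ as the paper does is a legitimate simplification and lets you drop the extra $\beta(\langle x\rangle+\langle y\rangle)$ penalization terms the paper carries around.

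However, there is a genuine gap in your comparison step, and it is precisely the point the paper's appendix is built to handle. You write that ``the subsolution inequality bounds $H(x_\eps,(x_\eps-y_\eps)/\eps)$ from above, so coercivity of $H$ forces $|(x_\eps-y_\eps)/\eps|$ to stay bounded.'' Computing the time-derivative of the test function for the subsolution at the maximizer gives
\[
H\Bigl(x_\eps,\tfrac{x_\eps-y_\eps}{\eps}\Bigr)\leqslant -\frac{t_\eps-s_\eps}{\eps}-\frac{\sigma}{(T-t_\eps)^2},
\]
and the term $-(t_\eps-s_\eps)/\eps$ is \emph{not} under control: the standard penalization lemma only gives $|t_\eps-s_\eps|^2/\eps\to 0$, so $|t_\eps-s_\eps|/\eps$ may blow up like $o(\eps^{-1/2})$. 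When $s_\eps>t_\eps$ this upper bound on $H(x_\eps,\cdot)$ is useless, and coercivity gives you nothing. The same obstruction appears if you subtract the two viscosity inequalities first: you are left with $H(x_\eps,p_\eps)-H(y_\eps,p_\eps)\leqslant-\sigma/(T-t_\eps)^2$, and to exploit uniform continuity of $H$ in $x$ you still need $|p_\eps|$ bounded, which nothing in the semicontinuous setting provides a priori. This is exactly why the paper's Step~1 assumes the subsolution Lipschitz (so that $|p_\eps|\leqslant L$ comes for free from the supertangent property), and Step~2 reduces to Step~1 by replacing the general upper semicontinuous subsolution $v$ with its sup-convolution in time $v^\delta$, which is $1/\delta$-Lipschitz in $t$ by construction and then, through coercivity of $H$ and $F(x,Dv^\delta)\leqslant 1/\delta$, Lipschitz in $x$ as well (Proposition~\ref{prop v^delta}). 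You need some version of this regularization, because in the Perron construction $u^*$ and $u_*$ are only semicontinuous. A related subtlety you pass over quickly is the behaviour at $t=0$: your appeal to ``the initial ordering'' does work on the compact torus (continuity of $u_0$ plus semicontinuity at $t=0$ gives the needed uniform bound), but the paper's conditions $(u_0)$, $(v_0)$ and Remark~\ref{oss appendix comparison} exist precisely because on $\R^n$ this is a real hypothesis and not automatic; it would be worth acknowledging that compactness is doing this work for you.
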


The proof of this result can be always reduced to the case when the Hamiltonian is {coercive} thanks to the following simple remark:

\begin{pr}\label{prop change sign}
Let $u$ be a continuous function in $(0,+\infty)\times\T^n$. Then $u$ is a subsolution (respectively, a supersolution) of \eqref{hj} if and only if $-u$ is a supersolution (resp., a subsolution) of 
\[
 \partial_t v -H(x,-D_x v)=0\qquad\hbox{in $(0,+\infty)\times\T^n$.}
\]
\end{pr}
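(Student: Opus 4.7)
The plan is to verify both implications directly from the definitions in Subsection~\ref{sez visco}, relying on the elementary observation that the test functions for $-u$ are exactly the negatives of the test functions for $u$. Concretely, if $\f\in C^1$ is such that $u-\f$ attains a local maximum at $(t,x)$, then $(-u)-(-\f)=\f-u$ attains a local minimum at the same point. Hence $\f$ is a supertangent to $u$ at $(t,x)$ if and only if $\psi:=-\f$ is a subtangent to $v:=-u$ at $(t,x)$, and symmetrically with the roles of sub and super exchanged.

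It then suffices to rewrite the pointwise inequality defining the viscosity condition at $(t,x)$. Set $v=-u$ and $\psi=-\f$: since $\partial_t\f(t,x)=-\partial_t\psi(t,x)$ and $D_x\f(t,x)=-D_x\psi(t,x)$, the subsolution inequality \eqref{def subsol} for $u$, i.e.\
\[
\partial_t\f(t,x)+H\bigl(x,D_x\f(t,x)\bigr)\leqslant 0,
\]
translates into
\[
\partial_t\psi(t,x)-H\bigl(x,-D_x\psi(t,x)\bigr)\geqslant 0,
\]
which is exactly the supersolution inequality for the modified equation $\partial_t v-H(x,-D_x v)=0$ evaluated with the subtangent $\psi$ to $v$ at $(t,x)$. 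Conversely, starting from the supersolution condition for $v=-u$ and reversing the same algebraic manipulation recovers the subsolution condition for $u$. Exchanging the roles of sub and super (equivalently, swapping max and min) gives the second equivalence by the same computation.

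There is no real obstacle here: the statement is a direct bookkeeping exercise, and the only point to be a bit careful about is that the correspondence $\f\leftrightarrow-\f$ sends (strict) local maxima of $u-\f$ to (strict) local minima of $(-u)-(-\f)$ and vice versa, so the class of admissible test functions is preserved. Once this is noted, both equivalences follow by the sign computation above.
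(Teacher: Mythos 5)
Your proof is correct, and the sign computation is exactly what is needed: the involution $\f\mapsto-\f$ sends supertangents of $u$ to subtangents of $-u$ (and conversely), and the inequality $\partial_t\f+H(x,D_x\f)\leqslant 0$ becomes $\partial_t\psi-H(x,-D_x\psi)\geqslant 0$ under $\psi=-\f$. The paper actually states this proposition without proof, labelling it a ``simple remark,'' so there is no alternative argument to compare against; your direct bookkeeping with test functions is the natural (and essentially the only) route.
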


Theorem \ref{visco} is known by PDE specialists, however all the the proofs we have found in literature are given under sets of assumptions somewhat different from the ones herein considered. For completeness, we have provided in the Appendix a proof of Theorem \ref{visco} in a slightly more general form, namely we deal with the case when $\T^n$ is replaced by $\R^n$, see Theorem \ref{teo appendix}. Theorem \ref{visco} follows from this by taking the lift of $H$ and of the initial datum $u_0$ to the universal cover of the torus.\medskip 

In the rest of the paper, we will always assume the initial datum $u_0$ to be Lipschitz in $\T^n$ and we will denote by $S^t_H u_0(x)$, or simply by $S^t u_0(x)$ when no ambiguity is possible, the unique Lipschitz solution $u(t,x)$ to the evolutionary Hamilton--Jacobi equation \eqref{hj} with Hamiltonian $H$ and taking the initial datum $u_0$ at $t=0$. 

We end this section by recalling some facts about Tonelli Hamiltonians. We remind that a Hamiltonian $H$ is termed {\em  Tonelli} if it is of class $C^2$, satisfies the following superlinear growth in $p$:
$$\frac{H(x,p)}{|p|}\underset{|p|\to \infty}{\longrightarrow} +\infty,$$
and is strictly convex, in the sense that $\partial^2_{pp} H$ is everywhere positive definite as a quadratic form. We will be interested in some regularity properties of solutions to the Hamilton--Jacobi equation. Recall that a function $f:\T^n\to \R$ is said to be locally semi--concave (resp. semi--convex) (with a linear modulus) if it can be locally written as the sum of a smooth function and a concave (resp. convex) function (see \cite{CanSin} for a complete presentation). In particular, a locally semi--concave function has a supertangent at every point. This implies, for example, that the map $x\mapsto |x|$ is not locally semi--concave because of its singularity at $0$ which is downward. The following well-known fact holds,  (see for example Lemma 4.2 in \cite{BeSur}):
\begin{lm}\label{semi}
Let $H$ be a Tonelli Hamiltonian and $u_0$ an initial datum. Then, for all $t>0$, the function $S^t u_0$ is locally semi--concave in $\T^n$.
\end{lm}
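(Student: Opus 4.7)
The approach I would take is based on the Lax--Oleinik representation of $S^t u_0$. Denote by $L(x,v)=\sup_{p\in\R^n}\bigl(p\cdot v-H(x,p)\bigr)$ the Legendre dual of $H$: under Tonelli hypotheses, $L$ is $C^2$, strictly convex and superlinear in $v$. By classical calculus of variations,
$$S^t u_0(x)=\min\Bigl\{u_0(\gamma(0))+\int_0^t L(\gamma(s),\dot\gamma(s))\,ds\;:\;\gamma\in W^{1,1}([0,t],\T^n),\ \gamma(t)=x\Bigr\},$$
the minimum being attained by $C^2$ solutions of the Euler--Lagrange equation. Combining the superlinearity of $L$ with the Lipschitz bound on $u_0$ yields a uniform a priori estimate $|\dot\gamma(s)|\le R$ on every such minimizer, where $R$ depends only on $\|Du_0\|_\infty$, $L$, and a compact time interval containing $t$.

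To prove semi--concavity, fix $t>0$, let $x\in\T^n$ and $h\in\R^n$ with $|h|$ small, and choose a minimizer $\gamma$ for $S^t u_0(x)$. Introduce the competitors
$$\gamma_\pm(s)=\gamma(s)\pm\tfrac{s}{t}\,h,\qquad s\in[0,t],$$
so that $\gamma_\pm(0)=\gamma(0)$ and $\gamma_\pm(t)=x\pm h$. Using these as admissible curves for $S^t u_0(x\pm h)$ and summing the two resulting Lax--Oleinik upper bounds gives
$$S^t u_0(x+h)+S^t u_0(x-h)-2S^t u_0(x)\le\int_0^t\bigl[L(\gamma_+,\dot\gamma_+)+L(\gamma_-,\dot\gamma_-)-2L(\gamma,\dot\gamma)\bigr]\,ds.$$
A second--order Taylor expansion of $L$ about $(\gamma(s),\dot\gamma(s))$ eliminates the linear terms by the $\pm$ symmetry. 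Since the a priori estimate confines every $(\gamma_\pm(s),\dot\gamma_\pm(s))$ to a fixed compact set where $\|D^2 L\|\le K$, the integrand is bounded by $K(s^2/t^2+1/t^2)|h|^2$, and integrating produces
$$S^t u_0(x+h)+S^t u_0(x-h)-2S^t u_0(x)\le K\Bigl(\frac{t}{3}+\frac{1}{t}\Bigr)|h|^2,$$
uniformly in $x\in\T^n$. This is the quadratic estimate characterizing semi--concavity, with a constant that stays bounded as $t$ ranges in any compact subset of $(0,\infty)$.

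The main obstacle is securing the uniform a priori bound on minimizers so that $L$ and its second derivatives are controlled along the perturbed curves. This is classical Tonelli theory: superlinearity of $L$ and the Lipschitz bound on $u_0$ together yield an upper bound on the action of minimizers which, by the coercivity of the map $v\mapsto L(x,v)/|v|$, forces an integral bound on $|\dot\gamma|$; the pointwise velocity bound is then propagated via the Euler--Lagrange equation and the $C^2$ regularity of $L$. On the compact manifold $\T^n$ no further subtlety arises, and the perturbation argument above carries through cleanly.
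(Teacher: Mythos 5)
The paper does not actually prove this lemma; it simply cites it as Lemma 4.2 in Bernard's survey \cite{BeSur}. Your argument is the classical direct proof via the Lax--Oleinik formula, and it is correct: existence and $C^2$ regularity of Tonelli minimizers, the uniform a priori velocity bound (depending on $\|Du_0\|_\infty$, $L$, and a compact time interval), the symmetric affine-interpolation competitors $\gamma_\pm(s)=\gamma(s)\pm\frac{s}{t}h$, the cancellation of the linear terms in the second-order Taylor expansion of $L$, and the resulting explicit semi-concavity constant $K\bigl(\tfrac{t}{3}+\tfrac{1}{t}\bigr)$ are all standard and sound. In effect you have supplied a self-contained proof of a fact the paper leaves as a citation, following essentially the argument that the cited source and the literature (e.g.\ Cannarsa--Sinestrari) use; no gaps.
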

We also underline for further use that a function is both  locally semi--concave and locally semi--convex if and only if it is $C^{1,1}$ ($C^1$ with Lipschitz derivative, see Theorem 6.1.5 in \cite{Fa}).

Using this idea, Patrick Bernard proved the following result (Proposition 4.10 in \cite{BeSur}):
\begin{pr}
Let $u_0$ be a semi--convex function in $\T^n$ and $H$ a Tonelli Hamiltonian. Then, for $t>0$ small enough, $S^t u_0$ is $C^{1,1}$.
\end{pr}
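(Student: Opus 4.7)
The plan is to combine Lemma \ref{semi} (local semi-concavity of $S^t u_0$ for every $t > 0$) with the stated equivalence ``locally semi-concave and locally semi-convex $\Longleftrightarrow$ $C^{1,1}$''. What remains to prove is therefore: for $t > 0$ small enough, $S^t u_0$ is locally semi-convex.

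I would work with the Lax--Oleinik representation $S^t u_0(x) = \inf_y \{u_0(y) + A^t(y,x)\}$, where $A^t(y,x)$ is the minimal Tonelli action from $y$ to $x$ in time $t$. The key geometric input is that, for small $t$, the kernel $A^t$ is $C^\infty$ on a neighborhood of the diagonal $\{y = x\}$ and satisfies $\partial^2_{yy} A^t \succeq (c/t)\,\mathrm{Id}$ and $|\partial^2_{yx} A^t| \leq C/t$ (to leading order $A^t(y,x) \sim \tfrac{1}{2t}\langle M(x)(x-y), x-y\rangle$ with $M$ smooth and positive definite). Combined with $C_0$-semi-convexity of $u_0$, this makes $y \mapsto u_0(y) + A^t(y,x)$ strictly convex, with lower Hessian bound $\geq c/t - C_0 > 0$ provided $t < c/C_0$. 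Hence for such $t$ the infimum is attained at a unique point $y^{*}(x)$, located within distance $O(t)$ of $x$.

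The crucial step is then to show that $y^{*}$ is a Lipschitz function of $x$. I would argue through a subdifferential computation: the first-order condition reads $-D_y A^t(y^{*}(x),x) \in \partial^- u_0(y^{*}(x))$; setting $p_i = -D_y A^t(y^{*}(x_i),x_i) \in \partial^- u_0(y^{*}(x_i))$, the semi-convexity of $u_0$ yields
\[
(p_1 - p_2)\cdot(y^{*}(x_1) - y^{*}(x_2)) \geq -C_0\, |y^{*}(x_1) - y^{*}(x_2)|^2,
\]
while writing $p_1 - p_2 = -[D_y A^t(y^{*}(x_1),x_1) - D_y A^t(y^{*}(x_2),x_1)] - [D_y A^t(y^{*}(x_2),x_1) - D_y A^t(y^{*}(x_2),x_2)]$ and plugging in the Hessian bounds on $A^t$ gives the converse inequality $(p_1-p_2)\cdot(y^{*}(x_1) - y^{*}(x_2)) \leq -(c/t)|y^{*}(x_1)-y^{*}(x_2)|^2 + (C/t)|x_1 - x_2||y^{*}(x_1)-y^{*}(x_2)|$. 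Rearranging, $(c/t - C_0)|y^{*}(x_1)-y^{*}(x_2)| \leq (C/t)|x_1 - x_2|$, which is a Lipschitz bound on $y^{*}$ exactly in the regime $t < c/C_0$.

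Once $y^{*}$ is Lipschitz, semi-convexity of $S^t u_0$ comes out of a soft midpoint argument. The joint function $\Psi(y,x) := u_0(y) + A^t(y,x)$ is semi-convex in $(y,x)$ on a neighborhood of the graph of $y^{*}$, as the sum of a semi-convex and a smooth function. Testing $S^t u_0$ at $(x_1+x_2)/2$ against the trial point $(y^{*}(x_1) + y^{*}(x_2))/2$ and using joint semi-convexity of $\Psi$ gives
\[
S^t u_0\!\left(\tfrac{x_1+x_2}{2}\right) \leq \tfrac{1}{2}\bigl(S^t u_0(x_1) + S^t u_0(x_2)\bigr) + K\bigl(|y^{*}(x_1) - y^{*}(x_2)|^2 + |x_1 - x_2|^2\bigr),
\]
and the Lipschitz bound on $y^{*}$ absorbs the first square into the second, producing the desired semi-convex inequality. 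The main obstacle is the Lipschitz control on $y^{*}$, which is what forces the smallness of $t$: the $1/t$-strong convexity of the action kernel must dominate the semi-convexity defect $C_0$ of $u_0$, and the threshold $t < c/C_0$ emerging from this balance is precisely what the statement allows us to assume.
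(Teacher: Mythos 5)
The paper does not actually prove this statement; it only cites it as Proposition~4.10 in \cite{BeSur} (Bernard's survey on the Lax--Oleinik semigroup), so there is no ``paper's own proof'' to compare against. Judged on its own, your argument is correct and is the natural PDE/Lax--Oleinik proof of the result.

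Your structure is sound: since Lemma~\ref{semi} already gives local semi-concavity of $S^tu_0$ for all $t>0$, and since locally semi-concave plus locally semi-convex is equivalent to $C^{1,1}$, it suffices to establish local semi-convexity for small $t$. The action estimates you invoke --- smoothness of $A^t$ in a fixed neighborhood $\{|y-x|<\rho_0\}$ of the diagonal for $t$ small, together with $\partial^2_{yy}A^t\succeq(c/t)\mathrm{Id}$ and $|\partial^2_{yx}A^t|\leqslant C/t$ there --- are standard facts for Tonelli Lagrangians (they follow from the second variation of the action / Jacobi fields, or from the explicit small-time asymptotics you quote). Two small points worth being explicit about: (a) the a priori confinement $|y^*(x)-x|=O(t)$ uses only the superlinearity of $L$ and the Lipschitz bound on $u_0$ (which holds since a semi-convex function on a compact manifold is Lipschitz), so it is available \emph{before} one knows the minimizer lies in the smooth region; (b) the first-order condition $-D_yA^t(y^*(x),x)\in\partial^-u_0(y^*(x))$ is legitimate because the sum rule for subdifferentials applies when one summand is differentiable. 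Your monotonicity computation for the Lipschitz bound on $y^*$ is correct, and the midpoint step --- testing the infimum at $\bigl(\tfrac{y_1+y_2}{2},\tfrac{x_1+x_2}{2}\bigr)$ and using joint local semi-convexity of $\Psi(y,x)=u_0(y)+A^t(y,x)$ together with $\Psi(y_i,x_i)=S^tu_0(x_i)$ --- does deliver the semi-convexity inequality once $|y_1-y_2|\lesssim|x_1-x_2|$ is known.

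A minor variant you may find cleaner at the last step: once $y^*(x)$ is shown to be single-valued and Lipschitz, the standard envelope argument gives $D_xS^tu_0(x)=D_xA^t(y^*(x),x)$ pointwise (uniqueness of the minimizer together with the already-known semi-concavity forces differentiability of $S^tu_0$), and Lipschitz continuity of the right-hand side in $x$ gives $S^tu_0\in C^{1,1}$ directly, bypassing the midpoint computation. Bernard's own treatment in \cite{BeSur} is phrased in more Hamiltonian terms (following the image of the Lagrangian pseudo-graph of $u_0$ under $\varphi^t$ and showing it remains a Lipschitz graph for small $t$), but the estimates that make that argument work are the same small-time action/flow asymptotics you are using; the two proofs are really dual viewpoints on the same mechanism, namely that the $1/t$-strong convexity of the kinetic action dominates the fixed semi-convexity defect of $u_0$.
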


Finally, we will need the following beautiful result due to Fathi (Theorem 6.4.1 in \cite{Fa}):
\begin{Th}\label{transverse}
Let $v: \T^n\to \R$ be a $C^1$ function and $H$ be a Tonelli Hamiltonian whose flow will be denoted $(\varphi^t)$. Let $L$ be the graph of the differential of $v$. If there is an increasing and diverging sequence $(t_n)_{n\in \N}$ such that $\varphi^{t_n}(L)$ is a graph above $\T^n$ for all $n\in \N$, then $L$ is invariant by $(\varphi^t)$. Moreover, it follows that $v$ is $C^{1,1}$ and that there exists a constant $\alpha[0]\in \R$, only depending on $H$, such that $v$ is a strong solution of $H(x,D_x v)=\alpha[0]\quad\hbox{in $\T^n$}$. In particular
$$
\quad S^t v(x) = v(x)-t\alpha[0]\qquad\hbox{for all $(t,x)\in [0,+\infty)\times \T^n$.}
$$
\end{Th}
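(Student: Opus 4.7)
For each $n$, set $u_n := S^{t_n} v$. By Lemma~\ref{semi}, $u_n$ is locally semi-concave on $\T^n$. I first upgrade $u_n$ to $C^1$: the reachable gradient $D^* u_n(x)$ of a Lax--Oleinik solution is contained in $\pi_2\bigl(\varphi^{t_n}(L) \cap \pi_1^{-1}(x)\bigr)$, each element being the terminal momentum of a minimizing characteristic starting on $L$; under the graph hypothesis this set is the singleton $\{p_n(x)\}$, where $p_n$ denotes the continuous section of $\varphi^{t_n}(L)$. Semi-concavity gives $D^+ u_n(x) = \mathrm{conv}\,D^* u_n(x) = \{p_n(x)\}$, whence $u_n \in C^1$ with $du_n = p_n$.

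To upgrade $u_n$ to $C^{1,1}$, I use the symplectic involution $T(x, p) := (x, -p)$, which conjugates the backward $H$-flow to the forward flow of the Tonelli Hamiltonian $\check H(x, p) := H(x, -p)$. For $n < m$, the identity $\varphi^{-(t_m - t_n)}(L_m) = L_n$ (with $L_k := \varphi^{t_k}(L)$) translates, via $T$, into the statement that the forward $\check H$-flow for time $t_m - t_n$ sends $\mathrm{graph}(-du_m)$ to $\mathrm{graph}(-du_n)$, both graphs, with $C^1$ initial datum $-u_m$. Applying Lemma~\ref{semi} to $\check H$ with initial condition $-u_m$, together with the reachable-gradient argument of the previous paragraph repeated for $\check H$, yields that $-u_n$ is locally semi-concave, i.e.\ $u_n$ is locally semi-convex. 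Combined with the first paragraph, $u_n \in C^{1,1}$, as recalled before Lemma~\ref{semi}. Hence $L_n = \mathrm{graph}(du_n)$ is a Lipschitz submanifold, and $L = \varphi^{-t_n}(L_n)$, being its image under a smooth diffeomorphism, is also Lipschitz; equivalently, $v \in C^{1,1}$.

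For invariance of $L$, uniform Tonelli $C^{1,1}$ estimates render the normalized sequence $\tilde u_n := u_n - u_n(x_0) + t_n \alpha[0]$ equi-bounded and equi-$C^{1,1}$, where $\alpha[0] \in \R$ is the unique constant making this sequence bounded (Fathi's critical value). By Arzel\`a--Ascoli, along a subsequence, $\tilde u_n \to u_\infty$ in $C^1$ with $u_\infty \in C^{1,1}$ a classical solution of $H(x, du_\infty) = \alpha[0]$. Since $H$ is preserved along its flow, $H(x, du_n(x)) = H(\Psi_n^{-1}(x), dv(\Psi_n^{-1}(x)))$, where $\Psi_n(y) := \pi_1(\varphi^{t_n}(y, dv(y)))$ is a homeomorphism of $\T^n$; hence $\|H(\cdot, du_n) - \alpha[0]\|_\infty = \|H(\cdot, dv) - \alpha[0]\|_\infty$ is $n$-independent, yet tends to zero by the $C^1$-convergence $du_n \to du_\infty$. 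Therefore $H(\cdot, dv) \equiv \alpha[0]$, so $(t, x) \mapsto v(x) - t\alpha[0]$ is a classical solution of the evolution equation with datum $v$; by Theorem~\ref{visco} it coincides with $S^t v$, and invariance of $L$ follows from the method of characteristics.

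The main obstacle will be the $C^{1,1}$ upgrade of $u_n$: formalizing the conjugacy with $\check H$, verifying via the $C^1$-regularity of $u_m$ that $S^{t_m - t_n}_{\check H}(-u_m) = -u_n + \text{const}$ (i.e., the absence of shocks for the $\check H$-evolution of $-u_m$), and securing the uniform $C^{1,1}$ bounds required for the compactness argument in the final step.
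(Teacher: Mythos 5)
The paper does not prove this statement: it cites it as Theorem~6.4.1 of Fathi's book, so there is no paper-internal argument to compare against, and your proposal must stand on its own.

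Your first paragraph (the $C^1$-upgrade of $u_n$ by the singleton reachable-gradient argument) and the $\check H$-trick giving $u_n\in C^{1,1}$ are sound once the identity $S^{t_m-t_n}_{\check H}(-u_m)=-u_n+\mathrm{const}$ is pinned down, and the rearrangement identity $\|H(\cdot,du_n)-\alpha[0]\|_\infty=\|H(\cdot,dv)-\alpha[0]\|_\infty$ via the flow is a nice observation. However, there is a genuine error in the step passing from ``$L=\varphi^{-t_n}(L_n)$ is a Lipschitz submanifold'' to ``equivalently, $v\in C^{1,1}$.'' A set that is simultaneously a Lipschitz submanifold and a $C^0$ graph need not be a Lipschitz graph: take $v'(x)=\mathrm{sgn}(x)|x|^{1/3}$; its graph is parametrized bi-Lipschitzly by $y\mapsto(y^3,y)$, hence is a Lipschitz (in fact $C^\infty$) submanifold, while $v'$ is not Lipschitz. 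Concretely, a smooth symplectomorphism such as $(x,p)\mapsto(p,-x)$ sends the smooth Lagrangian graph $\{p=-x^3\}$ to the non-Lipschitz graph $\{p=x^{1/3}\}$, so the image of a Lipschitz Lagrangian graph under $\varphi^{-t_n}$ can fail to be a Lipschitz graph even when it happens to be a $C^0$ graph. You therefore have not established $v\in C^{1,1}$, and the appeal to the method of characteristics for the invariance of $L$ rests on exactly that missing regularity. The cleanest repair is to reverse the logical order: first run your rearrangement/compactness argument (which only needs $u_n\in C^{1,1}$ uniformly) to obtain $H(\cdot,dv)\equiv\alpha[0]$; then, since $v$ is a $C^1$ (hence viscosity) solution of the critical equation, it is semi-concave by standard Tonelli theory, while semi-convexity of $v$ follows by one more application of your $\check H$-argument to $-u_1$ over time $t_1$; together these give $v\in C^{1,1}$, after which invariance follows as you indicate.

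Two further points need tightening. The normalization $\tilde u_n:=u_n-u_n(x_0)+t_n\alpha[0]$ is not bounded for any choice of $\alpha[0]$ (one has $\tilde u_n(x_0)=t_n\alpha[0]$); you should either subtract $u_n(x_0)$ or add $t_n\alpha[0]$, not both. More substantively, the assertion that the subsequential $C^1$-limit $u_\infty$ is a classical solution of $H(x,du_\infty)=\alpha[0]$ is exactly Fathi's convergence theorem for the Lax--Oleinik semigroup and is being invoked without acknowledgement; as it is the engine that makes $H(\cdot,du_n)\to\alpha[0]$ in your scheme, it should be cited or argued explicitly (for instance, by showing that any $C^1$ subsequential limit of $u_n-u_n(x_0)$ is a fixed point of the normalized semigroup, using $u_{n+1}=S^{t_{n+1}-t_n}u_n$ and the equi-Lipschitz bounds).
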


The constant $\alpha[0]$ appearing above is called {\em critical value} for $H$. It is characterized as the only constant $a$ for which an equation of the kind \quad $H(x,D_x v)=a$\quad in $\T^n$ admits viscosity solutions, see \cite{LPV}. The viscosity solutions of the corresponding critical equation were used by Fathi as central objects in the weak KAM and Aubry-Mather theory (see \cite{Fa,FaSur,BeSur} for introductions), hence they are also known as {\em  weak KAM solutions}.

\subsection{Variational solutions and their link with viscosity solutions}
In this section we give a very brief overview of the notion of variational solutions. For a more detailed account we refer to \cite{beca, wei} and to the references therein. 

We will assume here that $H:\T^n\times \R^n \to \R$ is a $C^2$ Hamiltonian with compact levels, meaning that  $H^{-1}\left(\{a\}\right)$ is compact for all $a\in \R$. This ensures that $H$ has what is called finite propagation speed and that all the notions that will be used are well defined. Moreover, its Hamiltonian flow $(\varphi^t)$ is complete (indeed, $H$ remains constant along the trajectories of the flow).

Let us consider a smooth function $u_0:\T^n \to \R$, and we will denote by $L_{u_0}\subset \T^n\times \R^n$ the graph of the differential of $u_0$. It is a smooth Lagrangian submanifold of $\T^n\times \R^n$ for its standard symplectic structure. As it is commonly done, we introduce the Hamiltonian $$\HH : \R\times \T^n\times \R\times \R^n \to \R ,\quad (t,x,\tau,p)\mapsto \tau+H(x,p),$$
so that solving the evolutionary Hamilton--Jacobi equation is equivalent to solving $\HH(t,x,D_{(t,x)}U)=0$. We also define 
$$\Gamma_{u_0}=\{(0,x,-H(x,p),p),\ \ (x,p)\in L_{u_0}\}.$$
Let us denote by $(\Phi^t)$ the Hamiltonian flow generated by $X_\HH = (\partial_{(\tau,p)} \HH,-\partial_{(t,x)} \HH)$. For every fixed $T>0$ we set 
$$L_{\HH,u_0}=\bigcup_{t\in [0,T]} \Phi^t(\Gamma_{u_0}).$$
It is then known that $L_{\HH,u_0}$ admits a {\em generating function quadratic at infinity}, that is a function 
$$S:[0,T]\times \T^n \times \R^k \to \R, \quad (t,x,\eta)\mapsto S(t,x,\eta),$$
for some integer $k$, such that $S$ coincides with a quadratic form outside a compact set, $0$ is a regular value of the mapping $(t,x,\eta)\mapsto \partial_\eta S(t,x,\eta)$ and 
$$L_{\HH,u_0}=\big\{\big(t,x,\partial_{(t,x)}S(t,x,\eta)\big )\,:\, \partial_\eta S (t,x,\eta)=0 \big\}.$$ 
Moreover, the construction of $S$ is done in such a way that if $ \partial_\eta S(0,x,\eta)=0$ then $S(0,x,\eta)=u_0(x)$. 
Then, define 
$$V^tu_0(x) := \inf_{[\sigma]=A} \max_{\eta\in\sigma} S(t,x,\eta),$$ 
where $A$ generates some homology group depending on the topology of the sublevels of $S$. The function $(t,x)\mapsto V^t u_0(x) $ is the variational solution. This construction can also be generalized to Lipschitz initial data by limit arguments. 

If $H$ is convex in $p$, then $S^tu_0(x)=V^tu_0(x)$ for any Lipschitz continuous initial datum $u_0$. However, for general  Hamiltonians, this is not always the case since $V^t\circ V^s u_0$ may differ from $V^{t+s}u_0$. This is what is commonly referred to as the lack of the {\em semi--group property}. Recently, Wei \cite{wei,Wei2} established the following link (which is formulated in a more general way in her thesis):
\begin{Th}[\cite{wei}]\label{wei}
For any Lipschitz function $u_0:\T^n\to \R$, the following holds:
$$ 
S^tu_0(x) = \lim_{n\to +\infty} (V^{t/n})^n u_0(x)\qquad\hbox{for all $(t,x) \in (0,+\infty)\times \T^n$.}
$$
\end{Th}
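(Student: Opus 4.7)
The plan is to treat the theorem as a Chernoff/Trotter--type product formula: $V^h$ and the viscosity semigroup $S^h$ should share the same infinitesimal behaviour at $h=0^+$, so that iterating $V^{t/n}$ reconstructs $S^t$ in the limit. The two ingredients needed are (i) a quantitative short--time comparison $\|V^h v - S^h v\|_\infty = o(h)$, uniform over a suitable bounded class of Lipschitz data, and (ii) the $L^\infty$--stability of the viscosity semigroup $S^\tau$ in its initial datum, used to propagate the one--step error through the $n$ iterations.

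For (i), I would first prove the \emph{exact} identity $V^h u = S^h u$ whenever $u$ is $C^{1,1}$ and $h \le h_0(\|Du\|_\infty, \|D^2 u\|_\infty, H)$: on such a time window the Lagrangian $\Phi^t(L_u)$ remains a graph over $\T^n$, no caustic has yet formed, and the variational min--max collapses onto the unique smooth sheet of $\Phi^t(L_u)$; this sheet defines a classical solution of \eqref{hj} and is therefore equal to $S^t u$ by the uniqueness part of Theorem \ref{visco}. Extension to Lipschitz $u_0$ is then performed by sup/inf--convolution regularization, using both that $V^\tau$ on Lipschitz data is defined precisely as a limit of the smooth construction and that $S^\tau$ is continuous in the initial datum.

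For (ii), set $u_k := (V^{t/n})^k u_0$ and telescope through the semigroup $S$:
$$
(V^{t/n})^n u_0 - S^t u_0 \;=\; \sum_{k=0}^{n-1} \Big( S^{(n-k-1)t/n}\, V^{t/n} u_k \;-\; S^{(n-k-1)t/n}\, S^{t/n} u_k \Big).
$$
Invoking the standard $L^\infty$--contraction $\|S^\tau v - S^\tau w\|_\infty \le \|v-w\|_\infty$ (a direct consequence of the comparison principle built into Theorem \ref{visco}), one obtains
$$
\|(V^{t/n})^n u_0 - S^t u_0\|_\infty \;\le\; \sum_{k=0}^{n-1} \|V^{t/n} u_k - S^{t/n} u_k\|_\infty \;\le\; n\,\varepsilon(t/n),
$$
which tends to $0$ as $n\to\infty$ provided the one--step error $\varepsilon(h)$ is $o(h)$ uniformly along the iteration.

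The hard part is exactly this uniformity. The operator $V^\tau$ does not preserve $C^{1,1}$--smoothness, so the iterates $u_k$ are only Lipschitz and the short--time identity of step (i) cannot be applied verbatim at each stage. Two refinements are needed: a uniform Lipschitz bound on the family $\{u_k\}_{0\le k\le n}$, obtained from the finite propagation speed ensured by the compactness of the level sets of $H$, so that $\mathrm{Lip}(u_k)$ depends only on $\|Du_0\|_\infty$, $H$ and $t$; and a quantitative one--step estimate $\|V^h v - S^h v\|_\infty = o(h)$ valid uniformly over all $K$--Lipschitz $v$. Producing the uniform $o(h)$ bound from the generating--function description of $V^h$ --- for instance by comparing $V^h v$ to the variational solutions $V^h v^{\varepsilon}$ of smooth regularizations $v^\varepsilon$, using (i) on those, and controlling the defect via the semi--concave/semi--convex envelopes --- is the technical core of Wei's argument.
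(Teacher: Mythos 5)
The paper does not prove Theorem~\ref{wei} at all: it is quoted as a black box from Wei's work \cite{wei,Wei2}, so there is no ``paper's own proof'' against which to compare. Your proposal must therefore be judged on its own merits, and it contains a genuine gap exactly at the point you flag as the ``technical core''.

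The telescoping and the $L^\infty$--contraction of $S^\tau$ are fine, but the whole scheme then rests on a \emph{uniform} one--step estimate $\|V^h v-S^h v\|_\infty=o(h)$ over $K$--Lipschitz data, and the regularization you sketch cannot deliver it. If $v^\eps$ is, say, a sup--convolution of $v$ with $\|v-v^\eps\|_\infty\lesssim K^2\eps$ and $\|D^2v^\eps\|_\infty\lesssim 1/\eps$, then the short--time exact identity $V^hv^\eps=S^hv^\eps$ (no caustics) holds only for $h\lesssim\eps$; matching the two scales forces $\eps\sim h$ and gives $\|V^hv-S^hv\|_\infty\lesssim K^2 h$, i.e.\ $O(h)$, not $o(h)$. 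Summed over $n$ steps this leaves a bounded, non--vanishing defect, so the Chernoff argument does not close. Wei's actual route is structurally different: one establishes uniform Lipschitz bounds and finite propagation speed for the iterated operator, extracts a subsequential (equi--Lipschitz) limit of $(V^{t/n})^n u_0$, and then verifies the viscosity sub-- and supersolution inequalities for this limit \emph{directly at test points}. There, the short--time identity $V^h\varphi=S^h\varphi$ is only needed on the fixed, smooth test function $\varphi$, where it is available; uniqueness of the viscosity solution then promotes subsequential convergence to convergence of the full sequence. In short, the correct place to exploit the $C^{1,1}$ short--time exactness is on the test functions in the viscosity definition, not on the (merely Lipschitz) iterates themselves, and this is what sidesteps the missing $o(h)$ uniform consistency.
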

\medskip

\section{The multi--time Hamilton--Jacobi equation}
We will only be interested in systems of two equations. Therefore, in the following, $H$ and $G$ will be two smooth and proper Hamiltonians on $\T^n\times \R^n$. Note that no convexity assumptions are taken in the momentum variable. Given a Lipschitz function $u_0 : \T^n \to \R$, we are looking for a real function $u(t,s,x)$ defined on $[0,+\infty)\times [0,+\infty)\times \T^n$ which solves 
\begin{equation}\label{multi--time HJ}
\begin{cases}
\displaystyle
{\partial_t u}+H(x, D_x u)=0&\quad \hbox{in $(0,+\infty)\times (0,+\infty)\times \T^n$}\medskip\\
\displaystyle {\partial_s u}+G(x, D_x u)=0&\quad \hbox{in $(0,+\infty)\times (0,+\infty)\times \T^n$}\medskip\\
u(0,0,x)=u(x)&\quad \hbox{on $\T^n$}
\end{cases}
\end{equation}
in the viscosity sense. 
As first remarked in \cite{barles}, a necessary condition for viscosity solutions of \eqref{multi--time HJ} to exist, for any Lipschitz initial datum, is that $H$ and $G$ commute, in the  sense that their Poisson bracket vanishes, i.e.
$$
\{H,G\}:=\partial_p H\cdot  \partial_x G - \partial_p G \cdot \partial_x H=0\qquad\hbox{in $\T^n\times\R^n$}.
$$
This can be derived by differentiating the equation and by using the method of characteristics (for smooth initial data), see for instance Appendix C in \cite{davzav}. When the Hamiltonians are convex and coercive in the momentum variable,  this  condition is sufficient as well, as it is proved in \cite{barles}. In the more general case when the Hamiltonians are proper but not necessarily convex in the momentum, it is proved in \cite{viterbo} that the null Poisson bracket condition implies the existence of variational solutions to the multi--time equation.    
Knowing the links between variational and viscosity solutions (Theorem \ref{wei}), it seems natural to wonder if the vanishing of $\{H,G\}$ also implies the existence of solutions to (\ref{multi--time HJ}) in the viscosity sense. Using the semi--group property and Theorem \ref{visco}, it is easily seen that this is equivalent to requiring that, for any Lipschitz  $u_0 : \T^n \to \R$, the following hold:
$$ 
u(t,s,\cdot)=S^t_H\circ S^s_G u_0 = S^s_G\circ S^t_H u_0\qquad\hbox{for every $s,t>0$}.
$$
 
We will prove the following theorem:
 \begin{Th}\label{main}
 Let $H$ and $G$ be two commuting  Hamiltonians on $\T^n\times \R^n$ such that both $H$ and $-G$ are Tonelli. Then there exists a solution to the multi--time equation (\ref{multi--time HJ}) with Lipschitz initial datum $u_0:\T^n\to \R$ if and only if $u_0$ is $C^{1,1}$ and the graph of its differential is invariant by both the Hamiltonian flows of $H$ and $G$. Moreover, in this case, there are two constants $c_H$ and $c_G$ such that 
 $$H(x,D_xu_0)=c_H \quad {\rm and }\quad G(x,D_xu_0)=c_G, \qquad\hbox{for every $x\in \T^n$},$$
which means that 
$$ 
S^t_H\circ S^s_G u_0 = S^s_G\circ S^t_H u_0 = u_0-tc_H-s\, c_G\qquad\hbox{for every $s,t>0$}.
$$
 \end{Th}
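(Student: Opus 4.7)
I would prove the two directions separately; the ``if'' direction is a direct verification, while the ``only if'' direction carries the substance. For the easy direction, assume $u_0 \in C^{1,1}(\T^n)$ with $L_{u_0} := \{(x, D_x u_0(x)) : x \in \T^n\}$ invariant under both $\varphi^t_H$ and $\varphi^s_G$. Theorem \ref{transverse} applied to $H$ and $u_0$ produces a constant $c_H$ with $H(x, D_x u_0) \equiv c_H$; the same theorem applied to the Tonelli Hamiltonian $\tilde G(x, p) := -G(x, -p)$ and to $-u_0$ (whose differential graph is invariant under $\varphi^t_{\tilde G}$, by a direct computation of $X_{\tilde G}$) yields $c_G$ with $G(x, D_x u_0) \equiv c_G$. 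Then $(t, s, x) \mapsto u_0(x) - t c_H - s c_G$ is a classical, hence viscosity, solution of \eqref{multi--time HJ}.

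For the converse, suppose a viscosity solution $u$ exists. As pointed out in the introduction, this forces
\[
u(t, s, \cdot) \;=\; S^t_H\bigl(S^s_G u_0\bigr) \;=\; S^s_G\bigl(S^t_H u_0\bigr), \qquad s, t \geq 0.
\]
Since $H$ is Tonelli, Lemma \ref{semi} applied to the left factorization makes $u(t, s, \cdot)$ locally semi--concave in $x$ for every $t > 0$. To produce the matching semi--convexity from $G$, I rewrite $S^s_G f = -S^s_{\tilde G}(-f)$ using Proposition \ref{prop change sign}; since $\tilde G$ is Tonelli, Lemma \ref{semi} applied to $\tilde G$ and $-S^t_H u_0$ shows that the right factorization $S^s_G(S^t_H u_0)$ is locally semi--convex in $x$ for every $s > 0$. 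Hence $u(t, s, \cdot) \in C^{1,1}(\T^n)$ for every $(t, s) \in (0, +\infty)^2$.

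Now fix $(t_0, s_0) \in (0, +\infty)^2$ and set $v := u(t_0, s_0, \cdot)$, $L := \{(x, D_x v(x)) : x \in \T^n\}$. The classical method of characteristics for (HJ) with Cauchy datum $v$ at time $t_0$ produces, as long as $\varphi^{t-t_0}_H(L)$ remains a graph above $\T^n$, a $C^{1,1}$ solution whose differential graph at time $t$ is $\varphi^{t-t_0}_H(L)$; uniqueness in Theorem \ref{visco} identifies it with $u(t, s_0, \cdot)$. Since the latter is $C^{1,1}$ for every $t > 0$ by the previous step, a standard continuation argument forces $\varphi^r_H(L)$ to be a graph for every $r > 0$, so Theorem \ref{transverse} gives invariance of $L$ under $\varphi^r_H$ together with a constant $c_H$ satisfying $H(x, D_x v) \equiv c_H$; the analogous argument via $\tilde G$ yields invariance of $L$ under $\varphi^s_G$ and $G(x, D_x v) \equiv c_G$. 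Theorem \ref{transverse} next provides $S^r_H v = v - r c_H$ and $S^\sigma_G v = v - \sigma c_G$, so uniqueness and commutation give
\[
u(t, s, x) = v(x) - (t - t_0) c_H - (s - s_0) c_G \qquad \text{for all } t \geq t_0,\ s \geq s_0.
\]
Matching this formula across different base points in $(0, +\infty)^2$ shows that the constants $c_H, c_G$ and the graph $L$ are independent of $(t_0, s_0)$; letting $(t_0, s_0) \to (0, 0)$ and using continuity of $u$ at the origin yields $u_0 = v + t_0 c_H + s_0 c_G$, so $u_0$ is $C^{1,1}$ with $L_{u_0} = L$ invariant under both flows. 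The main obstacle, as I see it, is the regularity step: reading semi--convexity out of an equation whose Hamiltonian is concave in $p$ via the sign--change device of Proposition \ref{prop change sign} is the point where the Tonelli hypothesis on $-G$ (rather than on $G$) genuinely enters; once that is in hand, Fathi's Theorem \ref{transverse} together with uniqueness of viscosity solutions does the rest mechanically.
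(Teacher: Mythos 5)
Your proof is correct and follows essentially the same path as the paper's: both extract $C^{1,1}$ regularity of $u(t,s,\cdot)$ from the semiconcave/semiconvex double factorization, both use the fact that $C^{1,1}$ regularity for all times forces the graph of the differential to be pushed forward by the Hamiltonian flow (the paper cites \cite[Proposition 4.11.1 and Theorem 6.2.2]{Fa} for exactly the ``continuation argument'' you sketch), and both then invoke Theorem~\ref{transverse} and pass to the limit back to $u_0$. The only cosmetic differences are that you anchor the argument at a base point $(t_0,s_0)\to(0,0)$ while the paper freezes $s$ and regularizes with $v_\e=S^s_G\circ S^\e_H u_0$, $\e\to 0^+$, and that you spell out the trivial ``if'' direction which the paper omits.
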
 

\begin{oss}
Note that $c_H=\alpha_H[0]$ and $c_G=-\alpha_{-G}[0]$, where $\alpha_H[0]$ and $\alpha_{-G}[0]$ are the critical values associated with $H$ and $-G$.
\end{oss}

Before proving this theorem, we establish a simple lemma:
 \begin{lm}\label{check}
Let $G:\T^{n}\times\R^{n}\to\R$ such that either $G$ or $-G$ is a coercive Hamiltonian.  Let us set 
$$
\bar G(x,p) := -G(x,-p)\qquad\hbox{for every $(x,p) \in\T^n\times \R^n$}.
$$
Then, for any Lipschitz function $u_0 : \T^n\to \R$, the following holds :
$$
S^s_G u_0 = -S^s_{\bar G} (-u_0)\qquad\hbox{for every $s>0$}.
$$
In particular, if $-G$ is Tonelli, then $S^s_G u$ is locally semi--convex for any $s>0$. 
 \end{lm}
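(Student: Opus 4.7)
The plan is to deduce the identity $S^s_G u_0 = -S^s_{\bar G}(-u_0)$ directly from Proposition \ref{prop change sign} together with the uniqueness statement of Theorem \ref{visco}, and then to verify the semi-convexity claim by checking that $\bar G$ inherits the Tonelli property from $-G$.

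More precisely, set $w(s,x) := S^s_G u_0(x)$, so that $w$ is the unique Lipschitz viscosity solution of $\partial_s w + G(x, D_x w) = 0$ with initial datum $u_0$. Applying Proposition \ref{prop change sign} (with $H$ replaced by $G$ and $t$ replaced by $s$), the function $v := -w$ is a viscosity solution of
\[
\partial_s v - G(x, -D_x v) = 0 \qquad \hbox{in $(0,+\infty)\times \T^n$.}
\]
Since $-G(x,-p) = \bar G(x,p)$ by the very definition of $\bar G$, this rewrites as $\partial_s v + \bar G(x, D_x v) = 0$, and the initial condition for $v$ is $v(0,\cdot) = -u_0$. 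Either $G$ or $-G$ is coercive by hypothesis, and in both cases $\bar G$ or $-\bar G$ is coercive as well, so Theorem \ref{visco} applies and yields uniqueness; hence $v = S^s_{\bar G}(-u_0)$, which is exactly the claimed formula $S^s_G u_0 = -S^s_{\bar G}(-u_0)$.

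For the second assertion, assume $-G$ is Tonelli. I would verify that $\bar G$ is then Tonelli as well: it is $C^2$ and satisfies $\bar G(x,p)/|p| = (-G)(x,-p)/|-p| \to +\infty$ as $|p|\to\infty$; moreover, by the chain rule the two signs coming from differentiating $-p$ twice cancel, so $\partial^2_{pp}\bar G(x,p) = \partial^2_{pp}(-G)(x,-p)$ is positive definite. Lemma \ref{semi} then gives that $S^s_{\bar G}(-u_0)$ is locally semi-concave on $\T^n$ for every $s>0$, and therefore its opposite $S^s_G u_0$ is locally semi-convex.

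No step here looks genuinely difficult; the only thing to be careful about is tracking signs when applying Proposition \ref{prop change sign} and checking that the coercivity/Tonelli hypotheses transfer correctly from $G$ to $\bar G$, so that the uniqueness part of Theorem \ref{visco} and Lemma \ref{semi} are both legitimately applicable.
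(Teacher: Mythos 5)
Your proof is correct and follows essentially the same route as the paper, which simply cites Proposition \ref{prop change sign} for the identity and Lemma \ref{semi} for the semi-convexity; you merely spell out the sign-tracking and the verification that $\bar G$ inherits the Tonelli property from $-G$, both of which are accurate.
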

 
\begin{proof}
The first assertion follows from Proposition \ref{prop change sign}, while the second is a consequence of Lemma \ref{semi}. 
\end{proof}
In the next proof, recall that if $w:\T^n \to \R$ is any $C^1$ function, we denote by $L_w\subset \T^n \times \R^n$ the graph of its differential.
\begin{proof}[Proof of Theorem \ref{main}]
Let $u_0$ be an initial condition for which there exists a solution to the multi--time equation. We have seen that in this case 
$$
S^s_G\circ S^t_H u_0 =  S^t_H\circ S^s_G u_0 \qquad\hbox{for every $s,t>0$}.
$$
In particular, as the left hand side is locally semi--convex by Lemma \ref{check} and the right hand side is locally semi--concave, by Lemma \ref{semi}, we deduce that for all $s,t>0$, the function $S^s_G\circ S^t_H u_0 =  S^t_H\circ S^s_G u_0$ is $C^{1,1}$. 

Let $s>0$ be fixed. For $\e>0$, set $v_\e=S^s_G\circ S^\e_H u_0 \in C^{1,1}$. It follows from \cite[Proposition 4.11.1 and Theorem 6.2.2]{Fa} that 
$$
\varphi^t_H(L_{v_\e}) = L_{S^t_H v_\e}\qquad\hbox{for every $t>0$}.
$$
In particular, we may
 let $t$ go to infinity and use Fathi's theorem \ref{transverse}, yielding the existence of $c_H$ such that 
$$
S^t_H v_\e=  v_\e -tc_H\qquad\hbox{for every $t\geqslant 0$}.
$$
Now letting $\e\to 0^{+}$ and passing to the limit, we deduce that 
\begin{equation}\label{eq1 main thm}
S^t_H \circ S^s_G u_0=  S^s_G u_0 -tc_H\qquad\hbox{for every $t\geqslant 0$}.
\end{equation}
By continuity, the above identity holds for $s=0$ as well.

Let us now fix $t>0$. The same argument applied to $\bar H$, $\bar G$ and $-u_{0}$ yields the existence of a constance $c_{G}$ such that  
\begin{equation}\label{eq2 main thm}
S^s_G \circ S^t_H u_0=  S^t_H u_0 -sc_G\qquad\hbox{for every $s\geqslant 0$}.
\end{equation}
By continuity, the above identity holds for $t=0$ as well. Putting \eqref{eq1 main thm} and \eqref{eq2 main thm} together we get
\[
S^t_H \circ S^s_G u_0=u_0 -sc_{G}-tc_H=S^s_G \circ S^t_H u_0\qquad\hbox{for every $s,t\geqslant 0$},
\]
finally showing that $u_0$ is $C^{1,1}$ and a strong solution of the stationary equations associated with $H$ and with $G$. 
\end{proof}

Even if for a Tonelli Hamiltonian $H$ there are always weak KAM solutions, they may all fail to be $C^{1,1}$. In this instance, the previous theorem implies that there are no global solutions to the multi--time equation for $H$ and $-H$ (nor in fact for $H$ and $-G$ for any Tonelli Hamiltonian $G$). 

We furnish an example to conclude.

\begin{co}\label{cor main}
There exists a Tonelli Hamiltonian $H:\T\times\R\to\R$ such that the multi--time equation \eqref{multi--time HJ} with $G:=-H$ does not admit viscosity solutions, for any Lipschitz initial datum $u_0$.
\end{co}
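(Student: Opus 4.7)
The plan is to take $H(x,p):=\tfrac12 p^{2}+V(x)$ for a suitable smooth potential $V:\T\to\R$, set $G:=-H$, and apply Theorem \ref{main}. I first verify the hypotheses: $H$ and $-G=H$ are Tonelli, and $\{H,-H\}=0$ trivially. Since $X_{-H}=-X_{H}$, the flow of $G$ is $\varphi^{-t}_{H}$, so the joint invariance of $L_{u_{0}}$ under the Hamiltonian flows of $H$ and $G$ is just invariance under $(\varphi^{t}_{H})_{t\in\R}$. For $u_{0}\in C^{1,1}$ this is in turn equivalent to $u_{0}$ being a strong solution of $\tfrac12(u_{0}')^{2}+V(x)=c$ for some constant $c$, because $H$ is preserved along its flow (and, conversely, any $C^{1,1}$ classical solution has invariant graph by the standard ODE argument for characteristics). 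It therefore suffices to exhibit $V$ for which this stationary equation admits no $C^{1}$ periodic solution, whatever the value of $c$.

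The main step is the construction of a suitable asymmetric $V$. On $\T=\R/\Z$ I take
\[
V(x):=-\sin^{2}(2\pi x)-\delta\rho(x),
\]
where $\delta>0$ is small and $\rho\in C^{\infty}(\T)$ is a non-negative bump with $\rho\not\equiv 0$, supported in $(1/8,3/8)$. For $\delta$ small, $V\in C^{\infty}(\T)$ remains non-positive, attains its maximum $0$ exactly at $x=0$ and $x=1/2$, and both of these maxima are non-degenerate. The corresponding arc integrals
\[
A_{1}:=\int_{0}^{1/2}\sqrt{-2V(x)}\,dx,\qquad A_{2}:=\int_{1/2}^{1}\sqrt{-2V(x)}\,dx
\]
satisfy $A_{1}>A_{2}>0$, because the perturbation strictly increases the integrand only on the first arc.

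I then dispose of all possible values of $c$. If $c<0$, the quantity $2(c-V)$ is negative at $x=0$, so no real $u_{0}'$ can satisfy the equation there. If $c>0$, then $u_{0}'=\pm\sqrt{2(c-V)}$ is continuous and nowhere zero, hence of constant sign, so $\int_{\T}u_{0}'\neq 0$ contradicts periodicity. If $c=0$, then $u_{0}'$ is continuous with zero set $\{0,1/2\}$ and equals $\pm\sqrt{-2V}$ with a fixed sign on each of the two arcs; the periodicity requirement $\int_{\T}u_{0}'=0$ then forces one of $\pm A_{1}\pm A_{2}$ to vanish, which is impossible since $A_{1}\neq A_{2}$ and both are positive. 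Thus no $C^{1,1}$ initial datum $u_{0}$ can meet the conditions of Theorem \ref{main}, and the multi-time equation admits no viscosity solution for any Lipschitz initial datum.

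The delicate point is the choice of $V$: one needs simultaneously non-degeneracy of two equal-height maxima (so that the candidate $u_{0}'$ is continuous with exactly two prescribed zeros) and an asymmetry between the two arcs (so that periodicity must fail). Any symmetric potential, for instance $V_{0}(x)=-\sin^{2}(2\pi x)$, yields $A_{1}=A_{2}$ and actually admits the smooth weak KAM solution $u_{0}(x)=-\tfrac{\sqrt{2}}{2\pi}\cos(2\pi x)$; the perturbation $-\delta\rho$ is designed precisely to break this balance while preserving the Tonelli structure.
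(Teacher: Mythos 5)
Your proof is correct, and the overall strategy coincides with the paper's (invoke Theorem~\ref{main} and exhibit a Tonelli $H$ for which no $C^{1,1}$ initial datum can have a flow--invariant Lagrangian graph), but the concrete Hamiltonian and the argument for non-existence differ. The paper takes the pendulum $H(x,p)=\tfrac12 p^{2}+\cos(2\pi x)$, restricts at once to the critical energy $c=\alpha[0]$ by appealing to Fathi's theorem, and then uses semi--concavity of weak KAM solutions to argue that the unique (up to constant) weak KAM solution has a downward corner at $x=1/2$ and so is not $C^{1,1}$. You instead argue entirely by hand: via the ``Moreover'' part of Theorem~\ref{main} you reduce to ruling out $C^{1}$ periodic solutions of $\tfrac12(u_{0}')^{2}+V=c$ for \emph{every} $c\in\R$, and you do so with the elementary periodicity constraint $\int_{\T}u_{0}'\,dx=0$, together with the asymmetry $A_{1}\neq A_{2}$ of the two arcs of the critical level set. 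That argument is sound: the case split in $c$ is exhaustive, and the bump $\rho$ does break the balance of arc lengths while keeping both maxima of $V$ non-degenerate. One remark worth recording is that your double-bump potential is more elaborate than necessary: the paper's pendulum potential, having a \emph{single} maximum, makes your own elementary argument even simpler (the critical level set has one pinch point at $x=0$, so any continuous $u_{0}'$ on it has constant sign on $(0,1)$, giving $\int_{\T}u_{0}'\neq 0$ immediately, and the case $c>\alpha[0]$ is the same as your $c>0$ case). Your construction introduces a symmetry degeneracy ($A_{1}=A_{2}$ for $V_{0}=-\sin^{2}$) which then has to be broken by $\rho$; the pendulum avoids that detour. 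On the other hand, your closing observation that the unperturbed symmetric $V_{0}$ actually admits the smooth weak KAM solution $u_{0}(x)=-\tfrac{\sqrt 2}{2\pi}\cos(2\pi x)$ is a nice cautionary example the paper does not mention, and it makes clear that the non-existence phenomenon of Corollary~\ref{cor main} is genuinely sensitive to the choice of $H$.
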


\begin{proof}
According to Theorem \ref{main}, we just need to exhibit a Tonelli Hamiltonian $H$ that does not admit weak KAM solutions of class $C^{1,1}$. The example that follows is classical, see for instance \cite[Section 4.14]{Fa}. 

Consider the pendulum equation, corresponding to 
$$H(x,p)=\frac 12 |p|^2+ \cos (2\pi x) ,\  \mathrm{on}\  \T^1\times \R.$$
The Hamiltonian vector-field is then given by $X_H(x,p)= \big(p,2\pi\sin(2\pi x)\big)$. It possesses  $2$ fixed points, $(\frac 12 , 0)$ which is an elliptic fixed point, and $(0,0)$ which is hyperbolic. The stable and unstable manifold are given by the two separatrices which equations are $p=\pm \sqrt{2\big(1-\cos(2\pi x)\big)}=\pm 2\sin(\pi x)$. Those separatrices bound an elliptic island filled with periodic trajectories. In this case, it is known that $\alpha[0]= 1$. Moreover, the only weak KAM solution (up to addition of a constant) is given by 
$$
u(x) = 
\left\{ 
\begin{aligned}
 &\int_0^x 2\sin(\pi t) dt = \frac{2}{\pi}\big(1-\cos(\pi x)\big) , \quad & \forall x\in \big[0,\frac 12\big], \\
 & \frac{2}{\pi}- \int_{\frac{1}{2}}^x 2\sin(\pi t) dt = \frac{2}{\pi}\big(1+\cos(\pi x)\big) ,\quad & \forall x\in \big(\frac 12, 1\big),
\end{aligned}
\right.
$$
which is not of class $C^{1,1}$. 
Indeed,  the graph of the derivative of any other weak KAM solution $v$ must stay on the union of the two separatrices, the fact that $v$ is semi--concave means that this graph may only "jump downwards", therefore it may possess only one discontinuity, and in order for $v$ to  be $1$-periodic, this jump must occur at $1/2$. Hence $v'=u'$, that is $v-u$ is constant. 
 \end{proof}

It is proved in \cite{ZA3,chinois} that two commuting Tonelli Hamiltonians have the same weak KAM solutions, therefore  the previous theorem is consistent with this fact, in the sense that it is normal that whenever $H$ has a $C^{1,1}$ weak KAM solution, it will be the same for $G$.
 
 Let us conclude with one last speculation. The previous results, in order to be global, are stated for two Tonelli Hamiltonians. However, when a single Hamiltonian is  convex in the fibers in some areas (say for $x$ in an open set $U$ of the torus), and concave in others (say for $x$ in an open set $V$ of the torus), the previous discussions show that 
variational solutions of the evolutionary equation will be  either semi--concave (resp. semi--convex) in $U$ (resp. $V$)  
 for small times, i.e. as long as the method of characteristics applies and the latter do not leave those open sets. This need not remain true  in the long time. 
On the other hand, due to the semi--group property and the finite speed propagation, the viscosity solutions always remain semi--concave in $U$ (resp. semi--convex in $V$). These two different kinds of regularity might give a new insight as to detecting when 
variational solutions differ from viscosity solutions and thereby do not verify the semi--group property. We also mention the recent work \cite{bernardnew} on very similar issues.
 
%

\appendix
\section{}

In this Appendix, we want to give a proof of Theorem \ref{visco} in a slightly more general setting. In what follows we will denote by $\D{UC}(X)$ and $\D{BUC}(X)$ the space of uniformly continuous and bounded uniformly continuous real functions on the metric space $X$, respectively.\smallskip

We consider the following evolutionary Hamilton--Jacobi equation:
\begin{equation}\label{appendix eq hj}\tag{A}
{\partial_t u}+F(x, D_x u)=0\quad \hbox{in $(0,+\infty)\times \R^n$,}
\end{equation}
where the Hamiltonian $F:\R^n\times\R^n\to \R$ satisfies the following conditions:
\begin{itemize}
\item[(F1)] \quad $F\in\D{BUC}(\R^n\times B_R)$\ \ \qquad for every 
    $R>0$;\smallskip
\item[(F2)] \quad $\displaystyle{\inf_{x\in\R^n}{F(x,p)}\to +\infty}\quad$\quad\  as $|p|\to +\infty$.\medskip
\end{itemize}

We will prove the following result:

\begin{Th}\label{teo appendix}
Let $F:\R^n\times\R^n\to\R$ be a Hamiltonian satisfying (F1)-(F2) and let $u_0\in \D{UC}(\R^n)$. Then there exists a unique function $u\in\D{UC}([0,+\infty)\times\R^n)$ with $u(0,\cdot)=u_0$ that solves \eqref{appendix eq hj} in the viscosity sense. If $u_0$ is Lipschitz in $\R^n$, then $u$ is Lipschitz continuous in $[0,+\infty)\times\R^n$ and its Lipschitz constant only depends on $\|Du_0\|_\infty$ and $F$. 
\end{Th}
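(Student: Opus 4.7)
The plan is to establish, in this order: (i) uniqueness via a comparison principle, (ii) existence for Lipschitz initial data through Perron's method with explicit barriers and then for general uniformly continuous data by approximation, and (iii) the Lipschitz regularity statement by combining the time-Lipschitz barrier estimate with the one-sided coercivity assumption (F2).

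For the comparison principle, I would take a sub and supersolution $u,v\in\mathrm{UC}([0,+\infty)\times\R^n)$ with $u(0,\cdot)\leqslant v(0,\cdot)$ and run the standard doubling-of-variables argument. Since $\R^n$ is non compact and $u,v$ need not be bounded (uniform continuity only yields at most linear growth), one adds a penalization of the form $\gamma(\langle x\rangle+\langle y\rangle)$ with $\langle x\rangle=(1+|x|^2)^{1/2}$, along with the usual $\eps^{-1}(|x-y|^2+(t-s)^2)$ and a decaying multiplier in $t$. The linear-growth control guarantees that the penalized supremum is attained; at that point one obtains momenta $p_\eps,q_\eps$ bounded independently of $\eps$ and satisfying $|p_\eps-q_\eps|\to 0$ together with $|x_\eps-y_\eps|\to 0$. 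Assumption (F1) then yields $|F(x_\eps,p_\eps)-F(y_\eps,q_\eps)|\to 0$, and sending $\eps\to 0^+$ and afterwards $\gamma\to 0^+$ gives $u\leqslant v$.

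For existence, suppose first that $u_0$ is Lipschitz with constant $L$, and set $C:=\sup\{|F(x,p)|:x\in\R^n,\,|p|\leqslant L\}$, which is finite by (F1). A direct check, using that a supertangent to a Lipschitz function has gradient in the Clarke subdifferential (hence in $\overline{B}_L$), shows that $u_0(x)\pm Ct$ are respectively a viscosity super- and subsolution with the correct initial trace. Perron's method then produces a viscosity solution trapped between these barriers, in particular satisfying the initial condition, and uniform continuity in $(t,x)$ follows from comparison. For general $u_0\in\mathrm{UC}(\R^n)$, approximate by Lipschitz data $u_0^\eps(x):=\sup_{y}\{u_0(y)-\eps^{-1}|x-y|^2\}$, which converges uniformly to $u_0$ by the uniform continuity of $u_0$. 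The associated solutions $u^\eps$ form a Cauchy sequence in sup-norm by comparison, and the uniform limit is a viscosity solution by the standard stability of the notion.

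For the Lipschitz regularity statement, the barriers above and time translation combined with comparison immediately give $|u(t,x)-u(s,x)|\leqslant C|t-s|$ with $C$ depending only on $\|Du_0\|_\infty$ and $F$. The key observation is then that any smooth supertangent $\varphi$ to $u$ at a point $(t,x)$ must satisfy $|\partial_t\varphi(t,x)|\leqslant C$: the contact condition transfers the $C$-Lipschitz property of $u$ in $t$ to one-sided slopes of $\varphi$ at $(t,x)$, giving both bounds. The subsolution inequality then forces $F(x,D_x\varphi(t,x))\leqslant C$, and (F2) yields $|D_x\varphi(t,x)|\leqslant R$ for some $R$ depending only on $C$ and $F$. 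A standard spatial doubling-of-variables argument at fixed $t$ promotes this uniform pointwise control over supertangents into the assertion that $u(t,\cdot)$ is $R$-Lipschitz. The main technical obstacle throughout is the unboundedness of $\R^n$: one must carefully track the sublinear or linear growth of uniformly continuous functions and choose penalizations compatible with it, since comparison arguments on compact manifolds do not transfer verbatim.
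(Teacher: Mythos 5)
Your overall blueprint (comparison principle $\to$ Perron with barriers $\to$ approximation for $\mathrm{UC}$ data $\to$ Lipschitz regularity from time-Lipschitz bound plus coercivity) matches the paper's, and the existence and regularity parts are essentially correct. There is, however, a genuine gap in the comparison step.

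You assert that in the doubled maximization the momenta $p_\eps,q_\eps$ are ``bounded independently of $\eps$.'' This is not a consequence of the hypotheses: uniformly continuous sub- and supersolutions need not be Lipschitz, and the touching gradients $p_\eps=(x_\eps-y_\eps)/\eps+O(\gamma)$ are a priori unbounded as $\eps\to 0^+$. Since (F1) only gives boundedness and uniform continuity of $F$ on $\R^n\times B_R$ for \emph{each fixed} $R$, there is no way to conclude $F(y_\eps,q_\eps)-F(x_\eps,p_\eps)\to 0$ without first controlling $|p_\eps|$; and (F2) makes the unbounded case an $\infty-\infty$ indeterminacy, so one cannot simply drop those momenta either. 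The paper resolves precisely this issue by first proving comparison when the subsolution $v$ is globally Lipschitz (then the $p_\eps$-bound \emph{does} follow from the supertangent test), and by reducing the general case to this one through a sup-convolution in time, $v^\delta(t,x)=\sup_{s\geqslant 0}\{v(s,x)-|t-s|/\delta\}$. This regularization is $1/\delta$-Lipschitz in $t$ by construction, and then \emph{also} Lipschitz in $x$ on the set where the sup-convolution stays away from $t=0$, because coercivity (F2) combined with the bound $\partial_t v^\delta\geqslant -1/\delta$ forces $F(x,Dv^\delta)\leqslant 1/\delta$. An additional structural assumption on the behaviour near $t=0$ (the conditions ($v_0$), ($u_0$) in the paper) is needed to guarantee that the sup-convolution does not shift the touching point onto the boundary $\{t=0\}$, where there is no equation. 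Your proof, as written, silently skips this entire layer of the argument; without it the comparison claim is unjustified. The remaining parts of your proposal (Perron barriers $u_0\pm Mt$, sup-convolution approximation of general $\mathrm{UC}$ data, $t$-Lipschitz bound from comparison, and $x$-Lipschitz bound from coercivity applied to $F(x,D_xu)\leqslant M$) agree with the paper's proof.
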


We need to generalize the notions of viscosity sub and supersolution given at the beginning of Section \ref{sez visco} to the case of discontinuous functions. Let $\Omega$ be an open subset of $(0,+\infty)\times\R^n$ and let $u:\Omega\to \R$ be a possibly discontinuous locally bounded function. We define the {\em upper} and {\em lower semicontinuous envelope} of $u$, denoted by $u^*$ and $u_*$ respectively, as follows:
\[
 u^*(t,x):=\inf_{r>0}\sup_{(s,y)\in B_r\left((t,x)\right)\cap \Omega} u(s,y),
 \qquad 
 u_*(t,x):=\sup_{r>0}\inf_{(s,y)\in B_r\left((t,x)\right)\cap \Omega} u(s,y), 
\]
for every $(t,x)\in\Omega$. 
We say that $u^*$ is a {\em (non--continuous) viscosity subsolution} of \eqref{appendix eq hj} in $\Omega$ if \eqref{def subsol} holds for any supertangent $\varphi$ to $u^*$ at the point $(t,x)\in\Omega$. The gradient of any such supertangent $\varphi$ at the point $(t,x)$ is called {\em supergradient} of $u^*$ at $(t,x)$. We will denote by $D^+u^*(t,x)$ the set consisting of all supergradients of $u^*$ at $(t,x)$.  Analogously, we say that 
$u_*$ is a supersolution of \eqref{appendix eq hj} in $\Omega$ if \eqref{def supersol} holds for any subtangent $\varphi$ to $u_*$ at the point $(t,x)\in\Omega$. 

Given a function $f:\ccyl\to\R$, we introduce the following inequality depending on a pair of constants $\alpha>0$ and $M_\alpha>0$:
\begin{equation}\label{condition}
f(t,x)\geqslant f(0,x)- (\alpha+M_\alpha t)\qquad\hbox{for all $(t,x)\in\ccyl$.}
\tag{C}
\end{equation}

We will need the following comparison result between discontinuous sub and supersolutions.

\begin{pr}\label{prop appendix comparison}
Let $v$ and $u$ be, respectively, an upper and lower semicontinuous function on $[0,+\infty)\times\R^n$ satisfying condition \eqref{condition} for a pair of constants $\hat\alpha>0$ and $M_{\hat\alpha}>0$. 
Let us assume that $v$ is a subsolution and $u$ is a supersolution of \eqref{appendix eq hj}, and that either one of the following conditions hold:
\begin{itemize}
\item[(a)] for every $\alpha>0$, there exists $M_\alpha>0$ such that  \eqref{condition} holds with $f:=v$;\smallskip
\item[(b)] $v$ is continuous on $\ccyl$. 
\end{itemize}
Then
\[
 \sup_{[0,+\infty)\times\R^n}\big(v-u\big) 
  \leqslant
  \sup_{\R^n}\big(v(0,\cdot)-u(0,\cdot)\big). 
\]
\end{pr}

\begin{oss}\label{oss appendix comparison}
If the supersolution $u$ does not satisfy condition \eqref{condition}, then the comparison principle stated above does not hold, no matter how regular the subsolution $v$ is, see for instance Example 4.9 in \cite{CaSi}. We remark that condition \eqref{condition} is always fulfilled by $u$ and $v$ when the latter are absolutely continuous in $\ccyl$. In particular, Proposition \ref{prop appendix comparison} entails uniqueness of the solution of \eqref{appendix eq hj} in $\D{UC}\left(\ccyl\right)$. 
\end{oss}

For the proof of Proposition \ref{prop appendix comparison}, we will need to regularize the subsolution  $v$ through the following {\em sup--convolution in time}, see \cite{bardi}: for every fixed $\delta>0$, define 
\begin{equation}\label{def appendix v^delta}
v^\delta(t,x):=\sup_{s\geqslant 0} \left\{ v(s,x)-\frac{|t-s|}{\delta}\, \right\}, \qquad\hbox{$(t,x)\in [0,+\infty)\times\R^n$.}
\end{equation}
For every $(t,x)\in [0,+\infty)\times\R^n$, we denote by $\M^\delta(t,x)$ the set of $s\geqslant 0$ that realize the supremum in the definition of $v^\delta(t,x)$. We set
\[
 \Omega^\delta:=\D{int}\left\{(t,x)\in\cyl\,:\,v^\delta(t,x)>v(0,x)-\frac{t}{\delta},\right\}, 
\]
that is, $\Omega^\delta$ is the largest open subset of $(0,+\infty)\times\R^n$ made up of points $(t,x)$
for which $0\not\in \M^\delta(t,x)$.
Last, we set $\mu:=\max\{1,\sup_{\R^n\times\R^n} -F\}$, which is finite by assumption (F2). The following holds:

\begin{pr}\label{prop v^delta}
Let us assume that $v$ is upper semicontinuous on $\ccyl$ and a non-continuous subsolution of \eqref{appendix eq hj}. Then for every $\delta<{1}/{\mu}$ we have:
\begin{itemize}
 \item[{\em (i)}] \quad $v(t,x)\leqslant v^\delta(t,x) \leqslant v(0,x)+\mu t$\qquad for every $(t,x)\in\ccyl$. \smallskip
  \item[{\em (ii)}] For every $(t,x)\in\ccyl$ the set $\M^\delta(t,x)$ is nonempty and contained in $[0,2t/(1-\delta\mu)]$. In particular, $v^\delta$ is upper semicontinuous in $\ccyl$.\smallskip
  \item[{\em (iii)}] If $(t_0,x_0)\in \Omega^\delta$, then 
\[
 D^+ v^\delta(t_0,x_0)\subseteq D^+ v(s_0,x_0)\qquad\hbox{for every $s_0\in \M^\delta(t_0,x_0)$.}
\]
In particular, $v^\delta$ is an upper semicontinuous subsolution of \eqref{appendix eq hj} in $\Omega^\delta$.\smallskip
  \item[{\em (iv)}] The function $v^\delta(\cdot,x)$ is $1/\delta$--Lipschitz continuous in $[0,+\infty)$ for every fixed $x\in\R^n$. Moreover, there exists a positive constant  $L_\delta$ such that  
$$
|D_x v^\delta(t,x)|\leqslant L_\delta\qquad\hbox{for a.e. $(t,x)\in \Omega^\delta$.} 
$$
  \item[{\em (v)}] Let us assume that $v$ satisfies \eqref{condition} for some positive constants $\alpha$ and $M_\alpha$. Then for every $\delta>0$ small enough   
\[
 \left[2\alpha\,\frac{\delta}{1-\delta M_\alpha},+\infty\right)\times\R^n\subset \Omega^\delta
\]
\end{itemize}
\end{pr}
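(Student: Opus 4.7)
My plan is to prove the five parts in order, each exploiting the variational structure of $v^\delta$ together with the a priori linear growth bound $v(s,x)\leq v(0,x)+\mu s$ on $\ccyl$, which follows from $-F\leq \mu$ and the subsolution property of $v$ (for fixed $x$, $v(\cdot,x)$ is a one--dimensional viscosity subsolution of $\partial_t w\leq \mu$, obtained by testing against $\varphi(t)+C|y-x|^2$ with $C$ large, which reduces to the classical fact that such subsolutions are non--increasing after subtracting $\mu t$). For (i), the inequality $v\leq v^\delta$ is obtained by choosing $s=t$ in the defining supremum; substituting the growth bound inside the sup and using $\delta\mu<1$, elementary calculus shows that $\max_{s\geq 0}\{v(0,x)+\mu s-|t-s|/\delta\}$ is attained at $s=t$ and equals $v(0,x)+\mu t$, giving the upper bound. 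The same estimate proves (ii): for $s>2t/(1-\delta\mu)$ the value $v(s,x)-(s-t)/\delta$ is strictly less than the trivial lower bound $v(0,x)-t/\delta \leq v^\delta(t,x)$ (obtained by choosing $s=0$), so $\M^\delta(t,x)\subseteq [0,2t/(1-\delta\mu)]$, and upper semicontinuity of $v$ on this compact interval yields non--emptiness. Upper semicontinuity of $v^\delta$ then follows by a standard subsequence argument: if $(t_n,x_n)\to (t_0,x_0)$ and $s_n\in\M^\delta(t_n,x_n)$, the bound above keeps $(s_n)$ in a bounded set near $(t_0,x_0)$, so extracting a limit $s_0$ and using USC of $v$ gives $\limsup v^\delta(t_n,x_n)\leq v(s_0,x_0)-|t_0-s_0|/\delta\leq v^\delta(t_0,x_0)$.

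For (iii), let $(t_0,x_0)\in\Omega^\delta$ and $s_0\in\M^\delta(t_0,x_0)$; the interior condition forces $s_0>0$. Given a smooth supertangent $\varphi$ to $v^\delta$ at $(t_0,x_0)$, I introduce the time--translated test function $\psi(s,x):=\varphi(s+t_0-s_0,x)$, defined in a neighborhood of $(s_0,x_0)$. The inequality $v^\delta(t,x)\geq v(s_0+t-t_0,x)-|t_0-s_0|/\delta$ (valid for $t$ close to $t_0$ since then $s_0+t-t_0>0$ and $|t-(s_0+t-t_0)|=|t_0-s_0|$), combined with equality at $(t_0,x_0)$, yields that $v-\psi$ has a local maximum at $(s_0,x_0)$. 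Since $\partial_t\psi(s_0,x_0)=\partial_t\varphi(t_0,x_0)$ and $D_x\psi(s_0,x_0)=D_x\varphi(t_0,x_0)$, the supergradient is transferred intact, and the subsolution property of $v$ at $(s_0,x_0)$ translates into that of $v^\delta$ at $(t_0,x_0)$. For (iv), the bound $|v^\delta(t,x)-v^\delta(t',x)|\leq |t-t'|/\delta$ is immediate from the triangle inequality inside the sup. At any point of $\Omega^\delta$ where $(\tau,p)\in D^+v^\delta$, the time--Lipschitz bound gives $|\tau|\leq 1/\delta$ and the subsolution inequality from (iii) gives $F(x,p)\leq -\tau\leq 1/\delta$, so coercivity (F2) forces $|p|\leq L_\delta$ for some constant depending only on $F$ and $\delta$. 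A standard propagation argument upgrades this pointwise superdifferential bound to local $L_\delta$--Lipschitz continuity in $x$ on $\Omega^\delta$, and Rademacher's theorem yields the a.e. gradient bound.

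Finally, for (v), the inequality $v^\delta\geq v$ combined with assumption $(v_0)$ gives $v^\delta(t,x)-(v(0,x)-t/\delta)\geq -\alpha+(1/\delta-M_\alpha)t$, which exceeds $\alpha$ as soon as $t\geq 2\alpha\delta/(1-\delta M_\alpha)$, provided $\delta<1/M_\alpha$. This strict inequality holds uniformly in $x$ on the corresponding half--space, which is already an open subset of $\cyl$, hence contained in $\Omega^\delta$. The main obstacle I anticipate is (iii): the shift argument requires $s_0+t-t_0$ to remain strictly positive for $t$ near $t_0$, which is precisely why $\Omega^\delta$ is engineered to keep maximizers away from $s=0$; the other parts all reduce to direct calculations leveraging the growth bound, the coercivity of $F$, and the upper semicontinuity of $v$.
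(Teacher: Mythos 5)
Your proposal is correct and follows essentially the same route as the paper's own proof: the a priori growth bound $v(s,x)\leqslant v(0,x)+\mu s$ derived from $\partial_t v\leqslant\mu$, the elementary one-variable computation of the sup for (i)--(ii), the time-translation of supertangents for (iii), the coupling of the $1/\delta$-time-Lipschitz bound with coercivity for (iv), and the direct inequality chain via $(v_0)$ for (v). The only minor slip is in (v), where you remark that $[2\alpha\delta/(1-\delta M_\alpha),+\infty)\times\R^n$ ``is already an open subset''---it is not, but your computation in fact shows the strict inequality on the larger open set $(\alpha\delta/(1-\delta M_\alpha),+\infty)\times\R^n$, which is what the paper uses and suffices to conclude.
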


\begin{proof}
{\em (i)} The first inequality is obvious. To prove the second inequality, we first observe that $v$ satisfies 
\[
 \partial_t v \leqslant -F(x,Dv) \leqslant \mu\qquad\hbox{in $\cyl$}
\]
in the viscosity sense. We infer  
\[
 v(s,x)\leqslant v(\tau,x)+\mu(s-\tau)\qquad \hbox{for every $x\in\R^n$ and $0<\tau<s$,}
\]
in particular, by letting $\tau\to 0^+$ and by using the upper semicontinuity of $v$, we get 
\begin{equation}\label{appendix ineq v}
v(s,x) \leqslant v(0,x)+\mu s\qquad \hbox{for every $(s,x)\in\ccyl$.}
\end{equation}
For $\delta <1/\mu$ we get 
\[
 v^\delta(t,x)\leqslant v(0,x)+\sup_{s\geqslant  0}\left\{\mu s-\frac{|t-s|}{\delta}\,\right\}=v(0,x)+\mu t.
\]

{\em (ii)} Let us fix $(t,x)\in\ccyl$. The fact that $\M^\delta(t,x)$ is nonempty follows from \eqref{appendix ineq v} and the upper semicontinuity of $v$. Pick $s\in\M^\delta(t,x)$. From the definition of $v^\delta(t,x)$ and \eqref{appendix ineq v} we have 
\[
 v(0,x)-\frac{t}{\delta}\leqslant v^\delta(t,x)=v(s,x)-\frac{|s-t|}{\delta}\leqslant v(0,x)+\mu s-\frac{|s-t|}{\delta}.
\]
A straightforward computation shows that $s\leqslant 2t/(1-\delta\mu)$. The upper semicontinuity of $v^\delta$ readily follows from this and from the upper semicontinuity of $v$.  

{\em (iii)} First note that $(t_0,x_0)\in \Omega^\delta$ implies $\M^\delta(t_0,x_0)\subset (0,+\infty)$. To prove the first assertion, it suffices to observe that, if $\varphi$ is a supertangent to $v^\delta$ at $(t_0,x_0)\in\Omega^\delta$, then the function $(t,x)\mapsto \varphi(t+(t_0-s_0),x)$ is a supertangent to $v$ at $(s_0,x_0)$ for every fixed $s_0\in\M^\delta(t_0,x_0)$. Since $v^\delta$ is upper semicontinuous by {\em (ii)} and $F$ is time--independent, we infer that $v^\delta$ is an upper semicontinuous subsolution of \eqref{appendix eq hj} in $\Omega^\delta$.

{\em (iv)} The fact $v^\delta$ is $1/\delta$--Lipschitz in $t$ is apparent by definition. By using {\em (iii)} we get
\[
 F(x,Dv^\delta)\leqslant -\partial_t v^\delta\leqslant \frac{1}{\delta}\qquad\hbox{in $\Omega^\delta$,}
\]
and the estimate on $|D_x v^\delta|$ in $\Omega^\delta$ follows from the coercivity of $F$, see Lemma 2.5 in \cite{barles_book}.

{\em (v)} Pick $\delta<1/M_\alpha$ and set $T_\delta:=2\delta\alpha/(1-\delta M_\alpha)$. Then every $(t,x)\in (T_\delta/2,+\infty)\times\R^n$ satisfies  
\[
v(0,x)-\frac{t}{\delta}<v(0,x)-(M_\alpha t+\alpha)\leqslant v(t,x)\leqslant v^\delta(t,x),
\]
in particular $[T_\delta,+\infty)\times\R^n\subset\Omega^\delta$, as it was to be shown. 
\end{proof}

\noindent{\em Proof of Proposition \ref{prop appendix comparison}.} We assume $\sup_{\R^n}\big(v(0,\cdot)-u(0,\cdot)\big)<+\infty$, being the statement otherwise trivial. Furthermore, up to adding a suitable constant to $v$, we can assume that $\sup_{\R^n}\big(v(0,\cdot)-u(0,\cdot)\big)=0$. The assertion is thus reduced to proving that $v-u\leqslant 0$ in $\ccyl$. The proof is divided in two steps.\smallskip

\noindent{\em Step 1.} We first assume $v$ Lipschitz continuous in $\ccyl$, i.e. there exists $L>0$ such that
\[
 |v(t,x)-v(s,y)|\leqslant L(|x-y|+|t-s|)\qquad\hbox{for every $(t,x), (s,y)\in\ccyl$.}
\]
Let us suppose by contradiction that there exists $(\hat t,\hat x)\in\Tcyl$, for some $T>0$, such that
\[
 v(\hat t,\hat x)-u(\hat t,\hat x)=\gamma>0.
\]
Let us introduce a pair of functions  $\cchi,\,\Phi:\left(\Tlccyl\right)^2\to\R$ defined as 
\begin{eqnarray*}
 \cchi(t,x,s,y)&:=& \frac{1}{2\eps}\big(|x-y|^2+|t-s|^2\big)+\beta\langle x\rangle+\dfrac{\eta}{T-t},\\
 \Phi(t,x,s,y)&:=& v(t,x)-u(s,y)-\cchi(t,x,s,y),
\end{eqnarray*}
where $\langle x\rangle:=\sqrt{1+|x|^2}$, $\eta$ is a positive constant that will be suitably chosen, and $\eps$, $\beta$ are positive parameters that will be sent to $0$.    
We choose $\eta>0$ and $\beta>0$ small enough such that 
\[
 \beta\langle\hat x\rangle+\dfrac{\eta}{T-\hat t}\leqslant \frac{\gamma}{2}.
\]
Then 
\[
\sup_{\left(\Tlccyl\right)^2} \Phi \geqslant \Phi(\hat t,\hat x,\hat t,\hat x)
=
\gamma- \left(\beta\langle\hat x\rangle+\dfrac{\eta}{T-\hat t}\right)
\geqslant
\frac{\gamma}{2}
\]
We proceed to show that such a supremum is attained. By hypothesis, 
\[
 u(s,y)\geqslant u(0,y)-({\hat\alpha}+M_{\hat\alpha}s)\qquad\hbox{for every $(s,y)\in\ccyl$}
\]
for a pair of constants ${\hat\alpha}>0$ and $M_{\hat\alpha}>0$. 
Then
\begin{eqnarray*}
\Phi(t,x,s,y)
\leqslant 
L(t+|x-y|)+({\hat\alpha}+M_{\hat\alpha}s)
-
\frac{1}{2\eps}\left(|x-y|^2+|t-s|^2\right)-\beta\langle x\rangle-\dfrac{\eta}{T-t}
\end{eqnarray*}
If $(t,x,s,y)\in \Tlccyl$ satisfies $\Phi(t,x,s,y)>0$, then 
\begin{equation}\label{eq appendix pre-max}
\left(\dfrac{|x-y|}{2\eps}-L\right)|x-y|+\dfrac{|t-s|^2}{2\eps}
+
\beta\langle x \rangle + \dfrac{\eta}{T-t}
\leqslant 
{\hat\alpha} +(M_{\hat\alpha}+L)T=:C.
\end{equation}
This readily implies that there exists $(t_\eps, x_\eps, s_\eps, y_\eps)\in (\Tlccyl)^2$ such that 
\begin{equation}\label{eq appendix max}
\Phi(  t_\eps,  x_\eps,  s_\eps,  y_\eps)=\sup_{(\Tlccyl)^2}\Phi\geqslant \frac{\gamma}{2}
\end{equation}
This  maximum point also depends on the parameters $\eta$ and $\beta$, but such dependence is omitted to ease notations. We infer from \eqref{eq appendix pre-max} that 
\[
\langle x_\eps\rangle \leqslant \dfrac{C}{\beta},
\quad
t_\eps\leqslant T-\dfrac{\eta}{C}.
\]
By Proposition 3.7 in \cite{users}, we also know that 
\[
\dfrac{|t_\eps-s_\eps|^2}{\eps}\to 0,
\quad
\dfrac{|x_\eps-y_\eps|^2}{\eps}\to 0
\qquad
\hbox{as $\eps\to 0^+$}.
\]
We infer that $(t_\eps,x_\eps)$ and $(s_\eps,y_\eps)$ converge, along a subsequence as $\eps\to 0^+$, to a point 
$(t_0,x_0)\in [0,T-\eta/C]\times\R^n$ satisfying, cf. Proposition 3.7 in \cite{users}, 
\[
v(t_0,x_0)-u(t_0,x_0)-\beta\langle x_0 \rangle - \dfrac{\eta}{T-t_0}
=
\Phi(t_0,x_0,t_0,x_0)=\sup_{(t,x)\in\Tlccyl} \Phi(t,x,t,x)
\geqslant 
\dfrac{\gamma}{2}.
\]
Since $v(0,\cdot)-u(0,\cdot)\leqslant 0$ on $\R^n$,  we derive that $t_0>0$, hence  the points $(t_{\eps},x_{\eps})$, $(s_\eps,y_\eps)$ definitively  belong to an open neighborhood $I\times U$ of $(t_0,x_0)$ compactly contained in $(0,T)\times\R^n$.

We can now exploit the fact that $\varphi(t,x):=\cchi(t,x,  s_\eps,  y_\eps)$ is  
 a supertangent to $v$ at $(  t_\eps,  x_\eps)\in\Tcyl$ and  
$\psi(s,y):=-\cchi(  t_\eps,  x_\eps, s, y)$ is a subtangent to $u$ at $(  s_\eps,  y_\eps)\in\Tcyl$. 
Since $v$ is an upper semicontinuous subsolution and $u$ is a lower semicontinuous supersolution of \eqref{appendix eq hj}, we get
\begin{equation}\label{eq appendix comparison}
\partial_t\varphi(  t_\eps,  x_\eps)-\partial_s\psi(  s_\eps,  y_\eps)
\leqslant
F\big(  y_\eps,D\psi(  s_\eps,  y_\eps)\big)-F\big(  x_\eps,D\varphi(  t_\eps,  x_\eps)\big)
\end{equation}
An easy computation shows that
\[
\partial_t\varphi(  t_\eps,  x_\eps)-\partial_s\psi(  s_\eps,  y_\eps)
=
\dfrac{\eta}{(T-t_\eps)^2}
>
\dfrac{\eta}{T^2},
\]
while 
\[
 |D\varphi(  t_\eps,  x_\eps)-D\psi(  s_\eps,  y_\eps)|\leqslant \beta.
\]
Moreover, by the fact that $v$ is $L$--Lipschitz and $\varphi$ is a supertangent to $v$ at 
$(  t_\eps,  x_\eps)$, we get
\[
\left| D\varphi(  t_\eps,  x_\eps)\right|
=
\left | D\psi(  s_\eps,  y_\eps) +\beta\frac{  x_\eps}{\langle   x_\eps \rangle}\right|\leqslant L,
\]
yielding in particular that $| D\psi(  s_\eps,  y_\eps) |\leqslant L+\beta$. We now use the fact that $F$ is uniformly continuous in 
$U\times B_{L+1}$ in view of (F1): by sending $\eps$ and $\beta$ to $0$ in \eqref{eq appendix comparison} 
and by making use of the above estimates, we get 
$\eta/T^2\leqslant 0$, in contrast with the choice of $\eta>0$.\medskip

\noindent{\em Step 2.} Let us prove the result in the general case. Let  $\alpha>0$ and  $M_\alpha>0$ such that condition  \eqref{condition} holds with $f:=v$. Set 
$T_\delta:=2\delta\alpha/(1-\delta M_\alpha)$. According to Proposition \ref{prop v^delta}, for $\delta>0$ small enough the function $(t,x)\mapsto v^\delta(T_\delta+t,x)$ is Lipschitz continuous in 
$\ccyl$ and is a subsolution of \eqref{appendix eq hj} in $\cyl$, hence we can apply Step 1 to infer 
\begin{equation}\label{eq end comparison}
v^\delta(T_\delta+t,x)-u(t,x)
\leqslant 
\sup_{\R^n} \left(v^\delta(T_\delta,\cdot)-u(0,\cdot)\right)
\end{equation}
for any fixed $(t,x)\in \cyl$. 
If condition (a) holds, namely if the constant $\alpha>0$ in \eqref{condition} with $f:=v$ can be chosen arbitrarily small, we argue as follows:  by Proposition \ref{prop v^delta} we know that 
\[
v(t,x)-\frac{T_\delta}{\delta} \leqslant v^\delta(t,x)-\frac{T_\delta}{\delta} \leqslant v^\delta(T_\delta+t,x) \leqslant v(0,x)+\mu(T_\delta+t), 
\]
so
\[
v(t,x)-u(t,x)
\leqslant
\sup_{\R^n} \big(v(0,\cdot)-u(0,\cdot)\big)
+
\frac{2\alpha}{1-\delta M_\alpha}(1+\mu\delta).
\]
The assertion follows by sending $\delta\to 0^+$ and since $\alpha>0$ was arbitrarily chosen. 

If condition (b) holds, namely if $v$ is continuous in $\ccyl$, we exploit Proposition \ref{prop v^delta}-(i) to derive from  \eqref{eq end comparison}  that
\[
v(T_\delta+t,x)-u(t,x)
\leqslant 
\sup_{\R^n} \big(v(0,\cdot)-u(0,\cdot)\big)
+
\mu T_\delta.
\]
The assertion follows by sending $\delta\to 0^+$ since $T_\delta\to 0$ and $v$ is continuous on $\ccyl$. 

\qed

\medskip
The proof of Theorem \ref{teo appendix} is standard. It follows from Proposition \ref{prop appendix comparison} by making use of Perron's method.
\smallskip 

{\em Proof of Theorem \ref{teo appendix}.} We only need to show the existence of a solution, for the uniqueness comes from Proposition \ref{prop appendix comparison}.

Let us first assume $u_0\in\D{Lip}(\R^n)$. In view of the assumptions (F1)-(F2), there exists a positive constant $M$ such that 
$M>\sup\{|F(x,p)|\,:\,|p|\leqslant \|Du_0\|_\infty\,\}$. Then the functions 
\[
\underline u(t,x):=u_0(x)-Mt,\qquad \overline u(t,x):=u_0(x)+Mt,\qquad (t,x)\in\ccyl
\]
are, respectively, a sub and a supersolution to \eqref{appendix eq hj}. We denote by $\Sub$ the family of continuous functions on $\ccyl$ that are subsolutions of \eqref{appendix eq hj} and we set, for every $(t,x)\in\ccyl$,   
\[
u(t,x)=\sup\left\{v(t,x)\,:\,v\in\Sub,\ \underline u\leqslant v \leqslant \overline u\quad \hbox{on $\ccyl$}\right\}.
\]                                                                                                                    
Then $u^*$ and $u_*$ are, respectively, a sub and a supersolution of \eqref{appendix eq hj}, see Theorem 2.14, Chapter V in \cite{bardi}, and satisfy 
\[
u_0(x)-Mt\leqslant u_*(t,x) \leqslant u^*(t,x) \leqslant u_0(x)+Mt \qquad \hbox{on $\ccyl$}.
\]
We can therefore apply the comparison principle stated in Proposition \ref{prop appendix comparison} to infer that  $u^*\leqslant u_*$ on $\ccyl$. Since the other inequality is obvious by definition, we get that $u$ is continuous on $\ccyl$, solves \eqref{appendix eq hj} in the viscosity sense and takes the initial datum $u_0$ at $t=0$. 
It is left to show the Lipschitz continuity of $u$. To this aim, note that, for every fixed $h\in\R^n$, the function $u_h(t,x):=u(t+h,x)$ is a continuous solution to \eqref{appendix eq hj} with initial condition $u_h(0,x)=u(h,x)$. From the fact that $|u(t+h,x)-u(0,x)|\leqslant M(t+h)$ for every $(t,x)\in\ccyl$ one easily see that $u_h$ satisfies condition \eqref{condition} with $\alpha:=2Mh$ and $M_\alpha:=M$. 
By applying Proposition \ref{prop appendix comparison} to the continuous solutions $u_h, u$,  we get 
\[
\|u(t+h,\cdot)-u(t,\cdot)\|_{L^\infty(\R^n)}
\leqslant
\|u(h,\cdot)-u(0,\cdot)\|_{L^\infty(\R^n)}
\leqslant
M\,h.
\]
In other words, the function $u$ is $M$-Lipschitz in $t$. Using the fact that $u$ is a subsolution of \eqref{appendix eq hj}, we get that $u$ satisfies 
\[
 F(x,D_x u)\leqslant -\partial_t u\leqslant M\qquad\hbox{in $\cyl$}
\]
in the viscosity sense. By the coercivity of $F$, we conclude that $u(t,\cdot)$ is Lipschitz for every $t>0$, with a constant only depending on $u_0$ and $F$, see for instance Lemma 2.5 in \cite{barles_book}. 

If $u_0\in\D{UC}(\R^n)$, we can find, for instance by sup-convolution, see \cite{bardi}, a sequence of Lipschitz functions $u^k_0:\R^n\to\R$ that uniformly converge to $u_0$ on $\R^n$. Let us denote by $u^k:\ccyl\to\R$ the  corresponding Lipschitz solution of \eqref{appendix eq hj} with initial datum $u_0^k$. By the comparison principle we have
\[
\|u^m-u^k\|_{L^\infty(\ccyl)}
\leqslant
\|u_0^m-u_0^k\|_{L^\infty(\R^n)},
\]
that is, $(u^k)_k$ is a Cauchy sequence in $\ccyl$ with respect to the sup--norm. Hence the Lipschitz functions $u_k$ uniformly converge to a function $u$ on $\ccyl$, which is therefore uniformly continuous. By the stability of the notion of viscosity solution, we conclude that $u$ is a solution of \eqref{appendix eq hj} with initial datum $u_0$.\qed

\section*{Acknowledgment}
The second author wishes to thank Patrick Bernard for inciting him to write this note and Valentine Roos for explaining him the content and details of her work.

\bibliographystyle{siam}
\bibliography{rigidity}
\bibliographystyle{plain}

\end{document}